\newtheorem{dummy}{anything}[section]
\newtheorem{theorem}[dummy]{Theorem}
\newtheorem{lemma}[dummy]{Lemma}
\newtheorem{proposition}[dummy]{Proposition}
\newtheorem{corollary}[dummy]{Corollary}
\theoremstyle{definition}
\newtheorem{definition}[dummy]{Definition}
  \newtheorem{example}[dummy]{Example}
  \newtheorem{remark}[dummy]{Remark}
\newcommand{\cAb}{\mathcal Ab}
\newcommand{\cE}{\mathcal E}
\newcommand{\cF}{\mathcal F}
\newcommand{\cH}{\mathcal H}
\newcommand{\bbC}{\mathbb C}
\newcommand{\bbD}{\mathbb D}
\newcommand{\bbR}{\mathbb R}
\newcommand{\bbS}{\mathbb S}
\newcommand{\bbZ}{\mathbb Z}
\DeclareMathOperator{\Hom}{Hom} \DeclareMathOperator{\Aut}{Aut}
 \DeclareMathOperator{\Ext}{Ext}
\DeclareMathOperator{\Ind}{Ind} \DeclareMathOperator{\Inn}{Inn}
\DeclareMathOperator{\Res}{Res} 
 \DeclareMathOperator{\Isot}{Isot}
\DeclareMathOperator{\Isog}{Iso}
  \DeclareMathOperator{\Out}{Out}
 \DeclareMathOperator{\rk}{rk}
\newcommand{\iso}{\cong}
\newcommand{\G}{\Gamma}
\newcommand{\U}{\Upsilon}
\newcommand{\Fa}{\cH}  
\newcommand{\Fu}{\cF}   
\DeclareMathOperator{\Bdl}{Bdl} \DeclareMathOperator{\Rep}{Rep}
\DeclareMathOperator{\Or}{Or} \DeclareMathOperator{\obs}{obs}
\DeclareMathOperator{\ZOr}{\bbZ O_{\Fa}}
\newcommand{\la}{\langle}
\newcommand{\ra}{\rangle}
\newcommand{\vv}{\, | \,}
\newcommand{\id}{\mathrm{id}}
\newcommand{\diffeo}{\approx}
\newcommand{\leftexp}[2]{{\vphantom{#2}}^{ #1}{\hskip-1pt#2}}
\def\maprt#1{\smash{\,\mathop{\longrightarrow}\limits^{#1}\,}}
\begin{document}

\title{Fusion systems and constructing free actions on
products of spheres}

\author{\" Ozg\" un \" Unl\" u and Erg{\" u}n Yal\c c\i n }

\address{Department of Mathematics, Bilkent
University, Ankara, 06800, Turkey.}

\email{unluo@fen.bilkent.edu.tr \\
yalcine@fen.bilkent.edu.tr}

\keywords{}

\thanks{2000 {\it Mathematics Subject Classification.} Primary:
57S25; Secondary: 20D20.\\ The first author is partially supported
by T\" UB\. ITAK-TBAG/109T384 and the second author is partially
supported by T\" UBA-GEB\. IP/2005-16.}


\begin{abstract} We show that every rank two $p$-group acts freely and
smoothly on a product of two spheres.  This follows from a more
general construction: given a smooth action of a finite group $G$ on
a manifold $M$, we construct a smooth free action on $M \times \bbS
^{n_1} \times \dots \times \bbS ^{n_k}$ when the set of isotropy
subgroups of the $G$-action on $M$ can be associated to a fusion
system satisfying certain properties. Another consequence of this
construction is that if $G$ is an (almost) extra-special $p$-group
of rank $r$, then it acts freely and smoothly on a product of $r$
spheres.
\end{abstract}

\maketitle

\section{introduction}
\label{section:Introduction}

In \cite{smith}, P. A. Smith proved that if a finite group $G$ acts
freely on a sphere, then $G$ has no subgroup isomorphic to the
elementary abelian group $\bbZ /p \times \bbZ /p$ for any prime
number $p$. Later in \cite{milnor}, Milnor showed that there are
other restrictions on such a $G$, more precisely, he proved that if
$G$ acts freely on a sphere $\bbS ^n$, then $G$ has no subgroup
isomorphic to the dihedral group $D_{2p}$ of order $2p$ for any odd
prime number $p$.

Conversely, Madsen-Thomas-Wall \cite{madsen-thomas-wall} proved that
the Smith's condition together with Milnor's condition is enough to
ensure the existence of a free smooth action on a sphere $\bbS ^n$
for some $n\geq 1$. The existence proof of Madsen-Thomas-Wall used
surgery theory and exploited some natural constructions of free
group actions on spheres. Specifically, they considered the unit
spheres of linear representations of subgroups to show that certain
surgery obstructions vanish.

As a generalization of the above problem, we are interested in the
problem of characterizing those finite groups which can act freely
and smoothly on a product of two spheres. Similar to Smith's
condition there is a restriction on the existence of such actions:
Heller \cite{heller}, showed that if a finite group $G$ acts freely
on a product of two spheres, then $G$ has no subgroup isomorphic to
the elementary abelian group $\bbZ /p \times \bbZ /p \times \bbZ
/p$. The maximum rank of elementary abelian subgroups $(\bbZ/p)^k
\leq G$ is called the rank of $G$. So, Heller's result says that if
a finite group $G$ acts freely on a product of two spheres, then $G$
must have $\rk (G)\leq 2$. So far, no condition analogous to
Milnor's condition is found for the existence of smooth actions and
it appears as if to prove a converse, we need more constructions of
natural actions.

As a first attempt to construct free actions on products of two
spheres, one can take a product of two unit spheres $\bbS(V_1)\times
\bbS (V_2)$ where $V_1$ and $V_2$ are linear representations of the
group. However, it is not hard to see that for many groups of rank
2, it is not possible to find two linear spheres such that the
action on their product is free. For example, when $p$ is an odd
prime, the extraspecial $p$-group order $p^3$ and exponent $p$ does
not act freely on a product of two linear spheres although this
group has rank equal to two.

Another natural construction is to take a representation with small
fixity and consider the Stiefel manifolds associated to this
representation. The fixity of a $G$-representation $V$ is the
maximum dimension of fixed subspaces $V^g$ over all nontrivial
elements $g$ in $G$. If $G$ has an $n$-dimensional complex
representation of fixity $1$, then $G$ acts freely on the Stiefel
manifold $V_{n,2}(\bbC) \simeq U(n)/U(n-2)$. This space is the total
space of a sphere bundle over a sphere, and taking fiber joins, one
obtains a free action on a product of two spheres. This method was
used by Adem-Davis-\"Unl\"u \cite{adem-davis-unlu} to show that for
$p\geq 5$, every rank two $p$-group acts freely and smoothly on a
product of two spheres. However, there are examples of rank two
$2$-groups and $3$-groups which have no representation with fixity
1, so this method is not enough to construct free actions of rank
two $p$-groups on products of two spheres for all primes $p$.

A more general idea for constructing free actions on a product of
two spheres is to start with a representation sphere $\bbS(V)$ and
construct a $G$-equivariant sphere bundle over it so that the action
on the total space is free and the bundle is non-equivariantly
trivial. In the homotopy category, a similar idea was used by
Adem-Smith \cite{adem-smith} to show that many rank two finite
groups can act freely on a finite complex homotopy equivalent to a
product of two spheres. In particular, they showed that every rank
two $p$-group acts freely on a finite CW-complex homotopy equivalent
to a product of two spheres.

In this paper, we prove the following:

\begin{theorem}\label{theorem:RankTwoPGroups}
A finite $p$-group $G$ acts freely and smoothly on a product of two
spheres if and only if\ $\rk (G)\leq 2$.
\end{theorem}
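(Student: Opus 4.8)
The plan is to prove this theorem as a consequence of the general construction advertised in the abstract, so the real content is to verify that a rank two $p$-group $G$ fits the hypotheses of that construction. The "only if" direction is immediate from Heller's theorem quoted in the introduction: if $G$ acts freely on a product of two spheres, then $\rk(G) \leq 2$. So I would dispose of that direction in a single sentence and concentrate on the "if" direction, where $\rk(G) \leq 2$ is assumed.

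\medskip

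For the "if" direction, the strategy I would follow is to realize the desired free action via two successive applications of the sphere-bundle construction. First I would choose a linear representation $V$ of $G$ (or of a suitable subgroup) and form the representation sphere $\bbS(V)$, arranging that the isotropy subgroups occurring in the $G$-action on $M := \bbS(V)$ are exactly the subgroups one can control. The key is to identify the collection of isotropy subgroups with a fusion system $\Fa$ on a Sylow $p$-subgroup (here $G$ itself) satisfying the properties demanded by the general construction. Concretely, because $\rk(G)\leq 2$, every elementary abelian subgroup has rank at most $2$, so the nontrivial isotropy subgroups can be taken to be contained in a family of rank $\leq 1$ subgroups — precisely the situation in which Smith's and Milnor's obstructions do not arise at the level of each isotropy group. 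I would then invoke the general theorem to produce a smooth free action on $M \times \bbS^{n_1} \times \dots \times \bbS^{n_k}$, and finally reduce the number of sphere factors down to a single additional sphere by taking fiber joins, so that the total manifold is a product of exactly two spheres.

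\medskip

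The main obstacle, as I see it, is precisely the verification that the isotropy data of a carefully chosen $G$-representation can be packaged as a fusion system meeting the hypotheses of the construction — this is where the rank two condition must be used in an essential way. For rank two $p$-groups the classification of such groups (and in particular the structure of their maximal elementary abelian and metacyclic subgroups) should let me choose $V$ so that every isotropy subgroup has rank one, i.e. is cyclic or generalized quaternion, and these are exactly the groups that \emph{do} act freely on a single sphere by Madsen-Thomas-Wall. Thus the fusion-theoretic input reduces, group by group, to a known positive result, and the global gluing is handled by the general construction. I expect the delicate points to be (i) ensuring the bundle is non-equivariantly trivial enough that fiber joins collapse the product of many spheres to a product of two, and (ii) checking that no rank two $\bbZ/p\times\bbZ/p$ appears as an isotropy group, which is guaranteed by the choice of $V$ together with $\rk(G)\le 2$. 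Once the hypotheses are in place, the conclusion follows formally from the general theorem.
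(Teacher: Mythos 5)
Your ``only if'' direction and the first half of your ``if'' direction match the paper: the paper's Lemma \ref{lemma:bigcenter} does exactly what you propose, taking $M=\bbS(\theta)$ for a representation $\theta$ induced from a character of the center, so that every isotropy subgroup meets $Z(G)$ trivially and hence has rank one. But the rest of your plan has two genuine gaps. First, your final step --- producing a free action on $M\times\bbS^{n_1}\times\dots\times\bbS^{n_k}$ and then ``taking fiber joins'' to collapse the extra sphere factors --- is not a valid technique. Fiber joins (as in Adem--Davis--\"Unl\"u) join the fibers of a sphere bundle over a fixed base in order to trivialize the bundle; they do not convert a free action on a product of $k+1$ spheres into a free action on a product of two spheres. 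No such reduction can exist in general: $(\bbZ/p)^3$ acts freely and smoothly on a product of three spheres, so any collapsing procedure would contradict Heller's theorem. The paper needs no such step: its Theorem \ref{theorem:rank1isotropy} shows that when all isotropy subgroups have rank one, a \emph{single} additional sphere suffices, because the representation $\rho$ of $\Gamma$ can be chosen so that $\rho\circ\alpha_H$ is a free representation of every isotropy subgroup $H$; Corollary \ref{corollay:constructing smooth actions} then gives the free action on $M\times\bbS^N$ directly, and Theorem \ref{theorem:RankTwoPGroups} follows.

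Second, your claim that the fusion-theoretic input ``reduces, group by group, to Madsen--Thomas--Wall'' misidentifies where the difficulty lies. That each cyclic or generalized quaternion group acts freely on some sphere is classical (they already act freely on unit spheres of linear representations, so Madsen--Thomas--Wall is not even needed), but such group-by-group actions do not assemble into the required equivariant bundle. What the construction demands is a single finite group $\Gamma$, a family $\alpha_H:H\to\Gamma$ compatible with \emph{all} conjugation maps $c_g:H\to K$ in $G$, and one representation of $\Gamma$ whose restriction along every $\alpha_H$ is free. This is the actual content of Section \ref{section:ConstructionOfFiniteGroupGamma}: all isotropy subgroups are embedded into one maximal rank one $p$-group $S$ (cyclic $\bbZ/p^N$, or $Q_{2^N}$ when $p=2$; note $S$ is an auxiliary group, not $G$ or a Sylow subgroup of it), one takes the fusion system of \emph{all} monomorphisms between subgroups of $S$, and Park's biset theorem (Lemma \ref{lemma:FstableBisets}, via Lemma \ref{lemma:Fchar_leftFbiset} and Examples \ref{example:rankonePgroup} and \ref{example:quaternionic}) produces $\Gamma=\Gamma_{\Omega}$ together with the needed representation. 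The quaternion case is the delicate one --- the richness of homomorphisms between subgroups of $Q_{2^N}$ is precisely why the earlier method of \cite{unlu} failed at $p=2$ --- and your proposal, by treating the isotropy groups one at a time, does not engage with this compatibility problem at all.
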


The proof uses another method of construction of free actions on
products of spheres which was introduced by \" Unl\"u in his thesis
\cite{unlu}. The method uses a theorem of L\" uck-Oliver
\cite[Thm.~2.6]{luck-oliver} on constructions of equivariant vector
bundles over a finite dimensional $G$-CW-complex. We now describe
briefly the main idea of the L\" uck-Oliver construction: Let $X$ be
a finite dimensional $G$-CW-complex and $\Fa$ be the family of
isotropy subgroups of $X$. Given a compatible family of unitary
representations $\rho _H :H \to U(n)$ where $H \in \Fa$, one would
like to construct a $G$-equivariant vector bundle over $X$ so that
the representation over a point with isotropy $H$ is isomorphic to
$(\rho _H) ^{\oplus k}$ for some $k$. L\" uck-Oliver
\cite{luck-oliver} shows that this can be done if there is a finite
group $\Gamma$ which satisfies the following two conditions:
\begin{enumerate} \item There is a family of maps $\{\alpha _H : H
\to \G \vv H \in \Fa\}$ which is compatible in the sense that if
$c_g: H\to K$ is a map induced by conjugation with $g\in G$, then
there is a $\gamma \in \Gamma$ such that the following diagram
commutes:
$$\xymatrix{
H \ar[d]^{c_g} \ar[r]^{\alpha _H}
& \Gamma \ar[d]^{ c_{\gamma }}  \\
K  \ar[r]^{\alpha _K}
&  \Gamma  }\\
$$
\item $\Gamma$ has a representation $\rho : \Gamma \to U(n)$ such
that $\rho _H =\rho \circ \alpha _H$ for all $H \in \Fa$.
\end{enumerate}

In \cite{unlu}, \" Unl\" u showed that when all the groups in the
family $\Fa$ are cyclic $p$-groups, there is a finite group $\Gamma$
satisfying the above conditions for a family of representations
$\rho_H$ such that $H$ action on the unit sphere $\bbS(\rho_H)$ is
free. As a result of this, \" Unl\" u was able to show that when $p$
is odd, every rank two $p$-group acts freely and smoothly on a
product of two spheres. When $p=2$, the groups one has to deal with
are rank one $2$-groups and these can be cyclic or generalized
quaternion. It turns out that maps between subgroups of quaternion
groups have a much richer structure, so the method given in
\cite{unlu} does not extend directly to families of rank one
$2$-groups.

In this paper, we find a systemic way of constructing a finite group
$\Gamma$ satisfying the above conditions (i) and (ii) for some
suitable representation families. We first choose a finite group $S$
and map all subgroups in the family $\Fa$ into $S$ via some maps
$\iota _H : H \to S$. Then, we study the fusion system on $S$ that
comes from the conjugations in $G$ and different choices of mappings
$\iota_H$ (see \cite{broto-levi-oliver} or \cite{ragnarsson-stancu}
for a definition of a fusion system). Although the fusion systems
that arises in this way are not necessarily saturated, we use some
of the machinery developed for studying saturated fusion systems. In
particular, we use a theorem of S. Park \cite{park} to find $\Gamma$
as the automorphism group of an $S$-$S$-biset.

This method of finding a finite group $\Gamma$ works for more
general groups then the families formed by rank one $2$-groups. For
example, for a family $\Fa$ formed by elementary abelian $p$-groups,
we can easily find a finite group $\Gamma$ by choosing an
appropriate $S$-$S$-biset. Moreover, this process can be recursively
continued to obtain the following theorem.

\begin{theorem}\label{theorem:ElementaryAbelianIsotropy}
Let $G$ be a finite group acting smoothly on a manifold $M$. If all
the isotropy subgroups of $M$ are elementary abelian groups with
rank $\leq k$, then $G$ acts freely and smoothly on $M \times
\bbS^{n_1} \times \dots \times \bbS^{n_k}$ for some positive
integers $n_1,\dots, n_k$.
\end{theorem}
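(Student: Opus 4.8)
The plan is to argue by induction on the maximal rank $k$ of the isotropy subgroups of the $G$-action on $M$. The base case $k=0$ is precisely the case of a free action, where no sphere factors are needed. For the inductive step it suffices to construct a smooth $G$-equivariant complex vector bundle $E\to M$ whose unit sphere bundle $\bbS(E)$ satisfies two conditions: every isotropy subgroup of the $G$-action on $\bbS(E)$ is elementary abelian of rank $\leq k-1$, and $E$ is trivial as a non-equivariant bundle, so that $\bbS(E)\diffeo M\times \bbS^{n_1}$ for some $n_1$. Granting this, the inductive hypothesis applied to the $G$-manifold $\bbS(E)$ yields a free smooth $G$-action on $\bbS(E)\times \bbS^{n_2}\times\dots\times\bbS^{n_k}\diffeo M\times\bbS^{n_1}\times\dots\times\bbS^{n_k}$, completing the induction.

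To construct $E$ I would first fix a finite-dimensional $G$-CW structure on $M$ and let $\Fa$ denote its family of isotropy subgroups, each elementary abelian of rank $\leq k$. For each $H\in\Fa$ I want a unitary representation $\rho_H$ with $\rho_H^H=0$ whenever $\rk(H)=k$; since $H$ is elementary abelian, a full-rank subgroup equals $H$, so every unit vector $v\in\bbS(\rho_H)$ then has stabilizer $H_v\subsetneq H$ of rank $\leq k-1$. These representations must in addition form a compatible family realized through a single finite group $\Gamma$ as in conditions (i) and (ii), so that the L\"uck--Oliver theorem \cite[Thm.~2.6]{luck-oliver} produces a $G$-vector bundle $E$ whose fiber over a point with isotropy $H$ is a multiple of $\rho_H$. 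This is exactly where the fusion-system machinery enters: I would choose a finite group $S$ and monomorphisms $\iota_H:H\to S$, form the (possibly non-saturated) fusion system $\cF$ on $S$ generated by the conjugations $c_g$ coming from $G$, and apply S. Park's theorem \cite{park} to realize $\cF$ inside the automorphism group $\Gamma$ of a suitable $S$--$S$-biset. The embeddings then extend to maps $\alpha_H:H\to\Gamma$ satisfying the compatibility square (i), and one checks that for any single representation $\rho$ of $\Gamma$ the family $\rho_H:=\rho\circ\alpha_H$ is automatically compatible, because $\rho\circ c_\gamma\cong\rho$ turns (i) into the required isomorphisms $\rho_K\circ c_g\cong\rho_H$.

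It remains to pin down $\rho$. Because $\Fa$ consists of elementary abelian groups, one can take $S$ to be a sufficiently large elementary abelian group (treating the finitely many primes separately and combining by a product of the resulting groups), and then choose $\rho$ so that it has no nonzero fixed vectors under each of the finitely many rank-$k$ subgroups $\alpha_H(H)$; this gives condition (ii) together with $\rho_H^H=0$. With $E$ in hand, a point of $\bbS(E)$ over $x\in M$ with $G_x=H$ is a unit vector $v$ in the fiber, whose isotropy is $H_v\leq H$, an elementary abelian group of rank $\leq k-1$ by the choice of $\rho_H$ (and automatically so when $\rk(H)<k$); hence $\bbS(E)$ has the required isotropy. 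Finally I would make $E$ non-equivariantly trivial by the standard device of passing to a large fiberwise join and adding a complementary bundle, so that $\bbS(E)\diffeo M\times\bbS^{n_1}$.

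The main obstacle is the simultaneous realization forced by (i) and (ii): one must produce a single group $\Gamma$ through which the entire system of conjugations $c_g$ of $G$ factors coherently and which at the same time carries one representation restricting to a fixed-point-free representation on every maximal-rank isotropy subgroup. It is precisely the fusion-system viewpoint, together with Park's biset realization, that makes this possible for elementary abelian families; the secondary point requiring care is to keep the bundle non-equivariantly trivial while controlling its equivariant isotropy, which is what allows the inductive hypothesis to be reapplied to a genuine product of spheres.
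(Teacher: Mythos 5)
Your framework is the one the paper itself uses: induction on the maximal rank, embeddings $\iota_H\colon H\to S$ into a large elementary abelian group, Park's biset theorem to realize the resulting fusion system inside a finite group $\Gamma$, a compatible family $\alpha_H$, and a representation of $\Gamma$ that is fixed-point free on the maximal-rank isotropy subgroups so that crossing with the sphere drops every isotropy rank by one (your remark about treating the primes separately is, if anything, more explicit than Example \ref{example:elementaryabelian}, which fixes one prime). The genuine gap is the step that is supposed to make the total space an honest product: making $E\to M$ non-equivariantly trivial ``by passing to a large fiberwise join and adding a complementary bundle.'' Neither device does what you need. To add a complement while keeping the $G$-action you must complement equivariantly, i.e.\ find a $G$-bundle $E'$ with $E\oplus E'\iso M\times V$ for a genuine $G$-representation $V$ with the diagonal action; then $\bbS(E\oplus E')=M\times \bbS(V)$, and the isotropy ranks drop only if $V^H=0$ for every maximal-rank isotropy subgroup $H$. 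Nothing controls the fibers of $E'$, and in fact no such $G$-representation need exist: if it did, one could dispense with bundles altogether and use $M\times\bbS(V)$ directly, which for the extraspecial group of order $p^3$ and exponent $p$ would yield a free action on a product of two linear spheres --- impossible, as recalled in the introduction. As for fiberwise joins, replacing $E$ by $E^{\oplus t}$ preserves the isotropy control but does not trivialize a bundle over a general base (the first Chern class, for instance, is just multiplied by $t$); the join trick does trivialize bundles over a sphere base, as in \cite{adem-davis-unlu}, and finiteness of $J$-groups gives fiber homotopy triviality of some join over a finite complex, but that only produces a space homotopy equivalent to $M\times\bbS^{n_1}$ --- the Adem--Smith conclusion \cite{adem-smith} --- not a smooth free action on an actual product.

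The paper never trivializes over $M$. In Theorem \ref{theorem:LuckOliverconstruction} the bundle is built over a skeleton $E_{\Fa}G^{(d)}$ of the contractible space $E_{\Fa}G$: there the equivariant obstructions can be killed by passing to multiples, because the relative homology of the pair used in the proof is $\widetilde{H}_{q-1}(BC_{\Gamma}(\alpha_H))$, which is annihilated by $|\Gamma|$ (this is the role of the powering tower and of Proposition \ref{proposition:spectral_sequence}; the same argument cannot be run over $M$ itself, whose fixed-point sets have non-torsion homology), and non-equivariant triviality is automatic because the inclusion $E_{\Fa}G^{(d)}\hookrightarrow E_{\Fa}G^{(d+1)}$ is null-homotopic. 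The bundle over $M$ is then the pullback along a classifying map $M\to E_{\Fa}G^{(d)}$ with $d>\dim M$ (Corollary \ref{corollay:constructing smooth actions}), so it inherits both the prescribed fibers and the triviality. A secondary gloss in your write-up: you ``choose $\rho$'' with no nonzero fixed vectors on the subgroups $\alpha_H(H)$, but for an arbitrary finite group and subgroup such a representation need not exist; here it does because every maximal-rank $H$ has the same image $\iota(S)\leq\Gamma$ and the biset has isotropy in $\Aut(S)$, so that $\widetilde{V}=\bbC\Omega\otimes_{\bbC S}V$ restricts to $\iota(S)$ as a sum of twists $\varphi^{*}V$ with $\varphi\in\Aut(S)$ (Proposition \ref{proposition:IsoOfConstructedRep} and Example \ref{example:elementaryabelian}). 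With these two repairs --- both supplied by the paper --- your induction goes through.
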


As a corollary, we obtain the following:

\begin{corollary}\label{corollary:ExtraSpecial}
Let $G$ be an (almost) extraspecial $p$-group of rank $r$. Then, $G$
acts freely and smoothly on a product of $r$ spheres.
\end{corollary}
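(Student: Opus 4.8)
The plan is to deduce this from Theorem~\ref{theorem:ElementaryAbelianIsotropy} by exhibiting a linear action of $G$ on a single sphere whose isotropy subgroups are all elementary abelian of rank at most $r-1$. Write $Z_0=[G,G]$. In both the extraspecial and the almost extraspecial case one has $Z_0=\Phi(G)\cong \bbZ/p$ with $Z_0$ central; moreover $g^p\in Z_0$ for every $g\in G$, since $G/Z_0$ is elementary abelian. Because the center of $G$ is cyclic ($\bbZ/p$ or $\bbZ/p^2$), $G$ admits a faithful irreducible complex representation $V$. I would take $M=\bbS(V)$ with its linear $G$-action and apply Theorem~\ref{theorem:ElementaryAbelianIsotropy} with $k=r-1$, so that $M\times \bbS^{n_1}\times\dots\times \bbS^{n_{r-1}}$ is a product of exactly $r$ spheres.

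The key point is to analyze the isotropy subgroups $G_v$ for $v\in \bbS(V)$. Since $V$ is faithful and $Z_0$ is central, Schur's lemma shows that $Z_0$ acts by nontrivial scalars, so $V^z=0$ for every $1\neq z\in Z_0$; hence $G_v\cap Z_0=1$ for all $v$. Then $[G_v,G_v]\le [G,G]=Z_0$ together with $G_v\cap Z_0=1$ forces $[G_v,G_v]=1$, so $G_v$ is abelian; and for $g\in G_v$ one has $g^p\in Z_0\cap G_v=1$, so $G_v$ has exponent $p$. Thus every isotropy subgroup is elementary abelian.

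For the rank bound, suppose $E\le G$ is elementary abelian with $Z_0\not\le E$, equivalently $E\cap Z_0=1$. As $Z_0$ is central of order $p$ and exponent $p$, the product $EZ_0$ is again an elementary abelian subgroup, now of rank $\rk(E)+1$. Since $EZ_0$ is elementary abelian we get $\rk(E)+1\le \rk(G)=r$, i.e.\ $\rk(E)\le r-1$. Applying this with $E=G_v$ shows every isotropy subgroup of $\bbS(V)$ is elementary abelian of rank $\le r-1$, and Theorem~\ref{theorem:ElementaryAbelianIsotropy} then produces a free smooth action of $G$ on $\bbS(V)\times \bbS^{n_1}\times\dots\times \bbS^{n_{r-1}}$, a product of $r$ spheres. (When $r=1$, e.g.\ $G=Q_8$, this reads $k=0$ and the argument simply gives a free linear action on $\bbS(V)$ itself.)

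I expect the only real content to be the verification of the isotropy structure; once that is in place the statement is immediate from Theorem~\ref{theorem:ElementaryAbelianIsotropy}. The mild subtlety is obtaining a rank bound that is uniform over all the relevant groups at once: the odd extraspecial groups, the two families of extraspecial $2$-groups (whose rank is $n+1$ or $n$ according to type), and the almost extraspecial groups. Passing from $E$ to $EZ_0$ accomplishes this cleanly and sidesteps any case analysis through maximal isotropic or singular subspaces of the symplectic/quadratic form on $G/Z_0$.
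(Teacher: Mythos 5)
Your proposal is correct and follows essentially the same route as the paper: both produce a linear sphere on which a central subgroup of order $p$ acts freely, observe that isotropy subgroups therefore meet the center trivially and hence (since $\Phi(G)\le Z(G)$, equivalently via your commutator/$p$-th power argument) are elementary abelian of rank at most $r-1$, and then invoke Theorem \ref{theorem:ElementaryAbelianIsotropy} with $k=r-1$. The only difference is the choice of sphere: the paper takes $M=\bbS(\theta)$ with $\theta=\Ind^G_{\la a \ra}(\chi)$ for a central element $a$ of order $p$, which makes the freeness of the central action explicit and avoids appealing to the existence of a faithful irreducible representation and Schur's lemma.
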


The paper is organized as follows: Sections
\ref{section:EquivariantPrincipalBundles} and
\ref{section:EquivariantObstructionTheory} are preliminary sections
on equivariant principal bundles and equivariant obstruction theory.
In Section \ref{section:ConstructionofEquivariantBundles}, we review
the work of L\" uck-Oliver \cite{luck-oliver} on constructions of
equivariant bundles and prove Theorem
\ref{theorem:LuckOliverconstruction} which is a slightly different
version of Theorem 2.7 in \cite{luck-oliver}. Then, in Section
\ref{section:ConstructionOfFiniteGroupGamma}, we introduce a method
for constructing finite groups $\Gamma$ satisfying the properties
explained above. This is done using a theorem of S. Park \cite{park}
on bisets associated to fusion systems. Finally, in Section
\ref{section:ConstructionsofFreeActions}, we prove our main
theorems, Theorem \ref{theorem:RankTwoPGroups} and
\ref{theorem:ElementaryAbelianIsotropy}.

\section{Equivariant Principal Bundles}
\label{section:EquivariantPrincipalBundles}

In this section, we introduce the basic definitions of equivariant
bundle theory. We refer the reader to \cite{luck1}, \cite{luck2},
and \cite{luck-oliver} for more details.

Let $G$ be a compact Lie group. A relative $G$-CW-complex $(X,A)$ is
a pair of $G$-spaces together with a $G$-invariant filtration
$$A = X^{(-1)} \subseteq X^{(0)} \subseteq X^{(1)} \subseteq \ldots \subseteq
X^{(n)} \subseteq \ldots \subseteq \bigcup_{n \geq -1} X^{(n)} = X$$
such that $A$ is a Hausdorff space, $X$ carries the colimit topology
with respect to this filtration and for all $n \geq 0$,  the space
$X^{(n)}$ is obtained from $X^{(n-1)}$ by attaching equivariant
$n$-dimensional cells, i.e., there exists a $G$-pushout diagram as
follows
$$\begin{CD}
\underset{\sigma \in I_n}{\coprod } G/H_{\sigma } \times \bbS ^{n-1} @>{}>> X^{(n-1)}  \\
@V{ }VV  @VV{ }V\\
\underset{\sigma \in I_n}{\coprod } G/H_{\sigma } \times \bbD ^{n}
@>{}>> X^{(n)}
\end{CD}$$
where  $I_n$ is an index set, $H_{\sigma }$ is a subgroup of $G$ for
$\sigma \in I_n$, and $n\geq 0$. Elements of $I_n$ are called
equivariant $n$-cells and for $\sigma \in I_n$, the map $
G/H_{\sigma } \times \bbS ^{n-1} \to X^{(n-1)}$ is called the
attaching map and the map $G/H_{\sigma } \times \bbD ^{n}\to
X^{(n)}$ is called the characteristics map of the cell. Here we
consider $\bbS ^{-1}=\emptyset $ and $\bbD ^{0}=\{\text{a point}\}$.
The space $X^{(n)}$ is called the $n$-skeleton of $(X,A)$ for $n\geq
-1$. For more details about $G$-CW-complexes, see Section II.1-2 in
\cite{dieck} and Section I.1-2 in \cite{luck1}.

We now give the definition for the classifying space of a group
relative to a family.

\begin{definition}
\label{definition:EFG} Let $\Fa $ be a family of closed subgroups of
$G$ closed under conjugation. Define $E_{\Fa }(G)$ as the
realization of the nerve of the category $\cE _{\Fa } (G)$ whose
objects are pairs $(G/H,xH)$ where $H\in \Fa $ and $x\in G$ and
morphisms from $(G/H,xH)$ to $(G/K,yK)$ are the $G$-maps from $G/H$
to $G/K$ which sends $xH$ to $yK$.
\end{definition}

We can consider the space $E_{\Fa }(G)$ as a $G$-CW-complex with the
$G$-action induced by $g(G/H, xH)=(G/H, gxH)$ on the objects of the
category $\cE _{\Fa } (G)$. For any $H\in \Fa$, the space $E_{\Fa
}(G)^H$ is the realization of the nerve of the full subcategory of
$\cE _{\Fa } (G)$ with objects $(G/K,xK)$ where $H\leq K^x$. The
object $(G/H,H)$ is an initial object in this subcategory. Hence
$E_{\Fa }(G)^H$ is contractible for any $H\in \Fa$ and we get the
following classifying property of $E_{\Fa }(G)$.

\begin{proposition}\cite[Prop 2.3]{luck1}
Let $(X, A)$ be a relative $G$-CW-complex such that $G_x\in \Fa $
for all $x\in X$. Then, any $G$-map from $A$ to $E_{\Fa }(G)$
extends to a $G$-map from $X$ to $E_{\Fa }(G)$ and any two such
extensions are $G$-homotopic relative to $A$.
\end{proposition}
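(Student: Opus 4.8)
The plan is to prove both assertions by equivariant induction over the skeleta of the relative $G$-CW-complex $(X,A)$, using as the single structural input the fact already established above: for every $H \in \Fa$ the fixed-point space $E_{\Fa}(G)^H$ is contractible.

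I would first treat existence. Suppose inductively that a $G$-map $f^{(n-1)} \colon X^{(n-1)} \to E_{\Fa}(G)$ has been produced which restricts to the given map on $A = X^{(-1)}$; the base case is just $f$ on $A$. From the $G$-pushout presenting $X^{(n)}$, extending over the $n$-skeleton amounts to extending over each equivariant cell $G/H_\sigma \times \bbD^n$ compatibly with its attaching map $G/H_\sigma \times \bbS^{n-1} \to X^{(n-1)}$. By equivariance and the adjunction $\Map_G\bigl(G/H_\sigma \times Z, Y\bigr) \cong \Map\bigl(Z, Y^{H_\sigma}\bigr)$, giving such a $G$-map is the same as giving an ordinary map $\bbD^n \to E_{\Fa}(G)^{H_\sigma}$ whose restriction to $\bbS^{n-1}$ is the one already determined by $f^{(n-1)}$. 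The key observation is that $H_\sigma \in \Fa$: since the characteristic map is a homeomorphism on interiors, an interior point of the cell has isotropy group exactly $H_\sigma$, so $H_\sigma = G_x$ for some $x \in X$, and by hypothesis $G_x \in \Fa$. Therefore $E_{\Fa}(G)^{H_\sigma}$ is contractible, and any map $\bbS^{n-1} \to E_{\Fa}(G)^{H_\sigma}$ extends over $\bbD^n$. Performing this over all cells in $I_n$ at once yields $f^{(n)}$, and passing to the colimit over the filtration gives the desired $G$-extension $f \colon X \to E_{\Fa}(G)$.

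For uniqueness up to $G$-homotopy relative to $A$, I would feed the problem back into the existence statement rather than run a separate obstruction computation. Given two $G$-extensions $f_0, f_1 \colon X \to E_{\Fa}(G)$ agreeing on $A$, form the product relative $G$-CW-complex $(X \times I, B)$, where $B = (X \times \bd I) \cup (A \times I)$ and $G$ acts trivially on $I$. On $B$ there is an evident $G$-map: it is $f_0$ on $X \times \{0\}$, $f_1$ on $X \times \{1\}$, and the $I$-constant map $f_0|_A = f_1|_A$ on $A \times I$; these agree on overlaps precisely because $f_0$ and $f_1$ agree on $A$. Since $G_{(x,t)} = G_x \in \Fa$ for every point of $X \times I$, the existence part applies to $(X \times I, B)$, so this $G$-map extends to a $G$-map $F \colon X \times I \to E_{\Fa}(G)$, which is by construction a $G$-homotopy from $f_0$ to $f_1$ relative to $A$.

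The entire argument rests on the fixed-point adjunction together with the already-proved contractibility of $E_{\Fa}(G)^{H_\sigma}$, so the only point demanding genuine care is the verification that each attaching subgroup $H_\sigma$ really lies in $\Fa$; this is exactly where the hypothesis $G_x \in \Fa$ for all $x \in X$ enters. The remainder is the standard skeletal bootstrap, and the uniqueness clause costs nothing extra once existence is in hand.
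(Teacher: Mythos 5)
Your proposal is correct and follows essentially the same route as the paper: the paper gives no written proof beyond establishing that $E_{\Fa}(G)^{H}$ is contractible for every $H\in\Fa$ (via the initial object $(G/H,H)$) and then citing L\"uck, and your skeletal induction --- cell-by-cell extension through the adjunction $\Map_G(G/H_\sigma\times Z,Y)\cong\Map(Z,Y^{H_\sigma})$, together with applying the existence statement to $\bigl(X\times I,\ (X\times\partial I)\cup(A\times I)\bigr)$ to get the relative $G$-homotopy --- is exactly the standard argument that derives the proposition from that contractibility. Your check that each $H_\sigma$ actually lies in $\Fa$, using that interior points of the cell $G/H_\sigma\times\bbD^n$ have isotropy exactly $H_\sigma$, correctly closes the one point where the hypothesis $G_x\in\Fa$ must enter.
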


Let $G$ be a finite group and $\Gamma $ be a compact Lie group. A
$G$-equivariant $\Gamma $-bundle over a left $G$-space $X$ is a
$\Gamma $-principal bundle $p:E\to X$ where $E$ is a left $G$-space,
$p$ is  a $G$-equivariant map, and the right action of $\Gamma $ on
$E$ and the left action of $G$ on $E$ commute. Let $\Bdl_{G,\Gamma
}(X)$ denote the isomorphism classes of $G$-equivariant $\Gamma
$-bundles over $X$.

Let $\Or _{\Fa}(G)$ denote the orbit category whose objects are
orbits $G/H$ where $H\in \Fa $ and morphisms are $G$-maps from $G/H$
to $G/K$. Assume that we are given an element
$$ {\mathbf A}=(p_H)\in \lim _{\underset{(G/H)\in \Or _{\Fa}(G)}{\longleftarrow
}} \Bdl_{G,\Gamma }(G/H)\subseteq \underset{H\in \Fa}{\prod
}\Bdl_{G,\Gamma }(G/H) $$ where a $G$-map from $G/H$ to $G/K$
induces a function from $\Bdl_{G,\Gamma }(G/K)$ to $\Bdl_{G,\Gamma
}(G/H)$ by pullbacks. A $G$-equivariant ${\mathbf A}$-bundle over a
left $G$-space $X$ is a $G$-equivariant $\Gamma $-bundle $p:E\to X$
such that for any $H\in \Fa $ and any $G$-equivariant map $i:G/H\to
X$, the pullback $i^*(p)$ is isomorphic to $p_H$ in $\Bdl_{G,\Gamma
}(G/H)$. Let $\Bdl_{G,{\mathbf A} }(X)$ denote the isomorphism
classes of $G$-equivariant ${\mathbf A}$-bundles over $X$.

\begin{lemma} We have
$$\Bdl_{G,\Gamma }(G/H)\iso \Rep (H,\Gamma ):=\Hom (H,\Gamma )/\Inn (\Gamma )$$
where $\Hom (H,\Gamma )$ is the set of homomorphisms from $H$ to
$\Gamma $ and $\Inn (\Gamma )$ is the group of inner automorphisms
of $\Gamma $ and the action of $\Inn (\Gamma )$ on $\Hom (H,\Gamma
)$ is given by composition.
\end{lemma}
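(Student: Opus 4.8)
The plan is to exhibit mutually inverse maps between $\Bdl_{G,\Gamma}(G/H)$ and $\Rep(H,\Gamma)$, obtained by restricting a bundle to the fiber over the identity coset $eH \in G/H$. The underlying principle is that, since $G$ acts transitively on $G/H$, a $G$-equivariant $\Gamma$-bundle over $G/H$ is completely determined by its fiber over $eH$ together with the commuting $H$-action on that fiber; this reduces the statement to the fact that $H$-equivariant $\Gamma$-bundles over a point are classified by $\Hom(H,\Gamma)/\Inn(\Gamma)$.

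First I would define the forward map $\Phi$. Given a $G$-equivariant $\Gamma$-bundle $p\colon E\to G/H$, let $F=p^{-1}(eH)$ be the fiber over the identity coset. Because $p$ is $G$-equivariant and the stabilizer of $eH$ is $H$, the subgroup $H$ acts on $F$ from the left, and this action commutes with the free transitive right $\Gamma$-action on $F$. Choosing a point $e_0\in F$, freeness lets me define $\alpha\colon H\to\Gamma$ by the rule $h\cdot e_0=e_0\cdot\alpha(h)$. A short computation using that the $G$- and $\Gamma$-actions commute shows $\alpha$ is a homomorphism, and replacing $e_0$ by $e_0\cdot\gamma$ replaces $\alpha$ by the conjugate $h\mapsto \gamma^{-1}\alpha(h)\gamma$; hence the class $[\alpha]\in\Hom(H,\Gamma)/\Inn(\Gamma)=\Rep(H,\Gamma)$ is independent of the choice of $e_0$. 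Since a bundle isomorphism restricts to a $\Gamma$-equivariant isomorphism of fibers commuting with the $H$-action, $[\alpha]$ depends only on the isomorphism class of $E$, so $\Phi\colon\Bdl_{G,\Gamma}(G/H)\to\Rep(H,\Gamma)$ is well defined.

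Next I would construct the inverse $\Psi$ via the associated (balanced product) bundle. Given $\alpha\colon H\to\Gamma$, let $H$ act on $G\times\Gamma$ by $h\cdot(g,\gamma)=(gh^{-1},\alpha(h)\gamma)$ and set $E_\alpha=(G\times\Gamma)/H$, with projection $[g,\gamma]\mapsto gH$, left $G$-action $g'\cdot[g,\gamma]=[g'g,\gamma]$, and right $\Gamma$-action $[g,\gamma]\cdot\gamma'=[g,\gamma\gamma']$. I would check that these are well defined on the quotient, that the right $\Gamma$-action is free and transitive on fibers, and that the two actions commute, so that $E_\alpha$ is a $G$-equivariant $\Gamma$-bundle; conjugating $\alpha$ yields an isomorphic bundle, so this descends to a map $\Psi\colon\Rep(H,\Gamma)\to\Bdl_{G,\Gamma}(G/H)$.

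Finally I would verify that $\Phi$ and $\Psi$ are mutually inverse. The identity $\Phi\circ\Psi=\id$ is immediate by taking $e_0=[e,1]$ in $E_\alpha$ and computing $h\cdot[e,1]=[h,1]=[e,\alpha(h)]=[e,1]\cdot\alpha(h)$. The main obstacle is $\Psi\circ\Phi=\id$: given an arbitrary bundle $E$ with fiber data $(F,e_0,\alpha)$, I must produce an explicit $G$- and $\Gamma$-equivariant isomorphism $E_\alpha\isomto E$. The key point is that $G$-transitivity on $G/H$ forces every element of $E$ to have the form $g\cdot(e_0\cdot\gamma)$, so the candidate map is $[g,\gamma]\mapsto g\cdot e_0\cdot\gamma$; the bulk of the work is checking that it is well defined on the quotient by $H$ (using $h^{-1}\cdot e_0=e_0\cdot\alpha(h)^{-1}$) and that it is bijective, the latter following because on each fiber it is a map of $\Gamma$-torsors covering the identity of the base. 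This reconstruction of $E$ from its fiber over $eH$ is the technical heart of the argument.
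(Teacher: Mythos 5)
Your proof is correct and takes essentially the same approach as the paper: both extract the homomorphism $\alpha$ from the relation $h\cdot x = x\cdot\alpha(h)$ on the fiber over the identity coset, observe that changing the basepoint or applying a bundle isomorphism conjugates $\alpha$ by an element of $\Gamma$, and identify the bundle with the balanced product $G\times_H\Gamma$. The only difference is cosmetic: the paper gets $\alpha$ by applying Goursat's lemma to the stabilizer $(G\times\Gamma)_x$ under the transitive $G\times\Gamma$-action and leaves the inverse verification $E(p_H)\iso G\times_H\Gamma$ implicit, whereas you construct the mutually inverse maps explicitly.
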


\begin{proof} For a $G$-equivariant $\Gamma $-bundle $p_H$ over the $G$-space
$G/H$, let $E(p_H)$ denote the total space of the bundle $p_H$. Take
a point $x\in p_H^{-1}(H)\subseteq E(p_H)$. Since $G\times \Gamma$
acts transitively on $E(p_H)$, we have $E(p_H)=(G\times
\Gamma)/(G\times \Gamma)_x$ and $(G \times \Gamma)_x \cap (1 \times
\Gamma)=\{ 1\}$. So, by Goursat's lemma,
$$(G\times \Gamma )_x=\Delta ( \alpha_x):=\left\{(h,\alpha_x(h))
\vv h\in H\right\} $$ where the homomorphism $\alpha _x:H\to \Gamma$
is defined by the equation $hx(\alpha_x(h))^{-1}=x$ for $h\in H$.
Let $f$ be a bundle isomorphism from $p_H$ to another
$G$-equivariant $\Gamma $-bundle $q_H$ over the $G$-space $G/H$.
Take $y\in q_H^{-1}(H)\subseteq E(q_H)$ and define $\alpha _{y}:H\to
\Gamma$ as above. Then there exists $\gamma _{x,y}\in \Gamma $ such
that $f(x)=y\gamma_{x,y}$. So, for all $h\in H$, we have $\alpha
_y(h)=\gamma_{x,y}\alpha _{x}(h)\gamma_{x,y}^{-1}$. Hence, up to
composition with an inner automorphism of $\Gamma $, there exists a
unique map $\alpha _H:H\to \Gamma $ such that $E(p_H)\iso G\times _H
\Gamma $ where the action of $H$ on $G\times \Gamma $ is given by
$h(g,\gamma )=(gh^{-1},\alpha_H(h)\gamma )$.
\end{proof}

We can view the family $\Fa $ as a category where the elements of
$\Fa $ are the objects of the category and morphisms are
compositions of conjugations in $G$ with inclusions. A morphism in
the category $\Or _{\Fa}(G)$ is a $G$-map from $G/H$ to $G/K$ and
can be written in the form $\hat{a}:G/H \to G/K$ where
$\hat{a}(gH)=ga^{-1}K$ for $a\in G$ such that $aHa^{-1}\leq K$. Now
the map induced by $\hat{a}$ from $\Bdl_{G,\Gamma }(G/K)$ to
$\Bdl_{G,\Gamma }(G/H)$ by pullbacks is equivalent to the map from
$\Rep(K,\Gamma )$ to $\Rep(H,\Gamma )$ induced by conjugation
$c_a:H\to K$ given by $c_a(h)=aha^{-1}$. Hence we can consider
$$ {\mathbf A}=(p_H)\in \lim _{\underset{(G/H)\in \Or _{\Fa}(G)}{\longleftarrow
}} \Bdl_{G,\Gamma }(G/H)$$ as an element
$${\mathbf A}=(\alpha _H)\in \lim _{\underset{H\in \Fa
}{\longleftarrow }} \Rep (H,\Gamma )\subseteq \underset{H\in
\Fa}{\prod }\Rep (H,\Gamma ).$$ A family of representations $\alpha
_H: H \to \Gamma$ is called a {\it compatible family} of
representations if it is an element of a limit as above. We now
describe the classifying space for $G$-equivariant ${\mathbf
A}$-bundles.

\begin{definition} Let ${\mathbf A}=(\alpha _H)$ be as above and let
$\Fa _{\mathbf A}$ be the family of subgroups $W \leq G\times
\Gamma$ such that $W=\text{graph} (\alpha _H)$ for some
representation $\alpha _H$ in ${\mathbf A}$. Define
$$E_{\Fa}(G,{\mathbf A})=E_{\Fa _{\mathbf A}}(G\times \Gamma)\text{\ \
and \ \ }B_{\Fa}(G,{\mathbf A})=E_{\Fa _{\mathbf A}}(G,{\mathbf
A})/\{1\}\times \Gamma .$$
\end{definition}

Note that the $G$-equivariant $\Gamma$-principal bundle
$E_{\Fa}(G,{\mathbf A})\to B_{\Fa}(G,{\mathbf A})$ is indeed a
$G$-equivariant ${\mathbf A}$-bundle. This is because for any $x\in
E_{\Fa}(G,{\mathbf A})$, we have $x\Gamma \in B_{\Fa}(G,{\mathbf
A})$ and the pullback of the bundle $E_{\Fa}(G,{\mathbf A})\to
B_{\Fa}(G,{\mathbf A})$ by the natural inclusion of $G/G_{x\Gamma
}\to B_{\Fa}(G,{\mathbf A})$ is isomorphic  to the bundle $(G\times
\Gamma)/(G\times \Gamma)_x\to G/G_{x\Gamma }$. We also know that
$(G\times \Gamma)_x\in \Fa _{\mathbf A}$ hence $(G \times \Gamma
)_x= \Delta (\alpha _H )$ for some $\alpha _H$ in ${\mathbf A}$. In
particular, $H=G_{x\Gamma}$. Hence the pullback of the bundle
$E_{\Fa}(G,{\mathbf A})\to B_{\Fa}(G,{\mathbf A})$ by the natural
inclusion of $G/G_{x\Gamma }\to B_{\Fa}(G,{\mathbf A})$ is
isomorphic to $p_{G_{x\Gamma }}$ in ${\mathbf A}$. We have the
following:

\begin{proposition}\cite[Lemma 2.4]{luck-oliver} Let
$\displaystyle {\mathbf A}=(\alpha _H : H \to \Gamma)$ be a
compatible family of representations. Then, the following hold:
\begin{enumerate}
\item The bundle $E_{\Fa} (G, {\mathbf A}) \to B_{\Fa} (G,
{\mathbf A})$ is the universal $G$-equivariant  ${\mathbf
A}$-bundle: If $X$ is a $G$-CW-complex such that $G_x\in \Fa $ for
all $x\in X$, then the map defined by pullbacks
$[X,B_{\Fa}(G,{\mathbf A})]_G \to \Bdl _{G, {\mathbf A}}(X)$ is a
bijection.
\item For all $H \in \Fa$, we have $B_{\Fa}(G,{\mathbf A}) ^H \simeq
BC_{\Gamma }(\alpha _H)$ where $C_{\G} (\alpha _H)$ denotes the
centralizer of the image of $\alpha_H$ in $\G$.
\end{enumerate}
\end{proposition}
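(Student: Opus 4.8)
The plan is to deduce both statements from the classifying property of $E_{\Fa_{\mathbf A}}(G\times\Gamma)$ recorded in \cite[Prop 2.3]{luck1}, after translating the language of $G$-equivariant $\mathbf A$-bundles over $X$ into the language of $(G\times\Gamma)$-CW-complexes. The bridge is the observation already used just before the statement: the total space $E$ of a $G$-equivariant $\Gamma$-principal bundle $p\colon E\to X$ is naturally a $(G\times\Gamma)$-space on which $\{1\}\times\Gamma$ acts freely with $E/\Gamma=X$, and the isotropy group of a point over $x\in X$ with $G_x=H$ is a graph subgroup $\Delta(\phi)$ of a homomorphism $\phi\colon H\to\Gamma$. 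For an $\mathbf A$-bundle the defining condition forces $\phi$ to be conjugate to $\alpha_H$, so all isotropy groups of $E$ lie in $\Fa_{\mathbf A}$; conversely any $(G\times\Gamma)$-CW-complex with free $\Gamma$-action and isotropy in $\Fa_{\mathbf A}$ arises this way. Thus $G$-equivariant $\mathbf A$-bundles over $X$ correspond to $(G\times\Gamma)$-CW structures on their total spaces with isotropy in $\Fa_{\mathbf A}$.

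For part (1), given such a bundle $p\colon E\to X$, I would apply the classifying property to the $(G\times\Gamma)$-CW-complex $E$ to obtain a $(G\times\Gamma)$-map $f\colon E\to E_{\Fa_{\mathbf A}}(G\times\Gamma)$, unique up to $(G\times\Gamma)$-homotopy. Passing to $\Gamma$-quotients yields a $G$-map $\bar f\colon X\to B_{\Fa}(G,\mathbf A)$, well defined up to $G$-homotopy; since $f$ is $\Gamma$-equivariant between spaces with free $\Gamma$-action it is a map of $\Gamma$-principal bundles covering $\bar f$, whence $E\cong\bar f^{\,*}E_{\Fa}(G,\mathbf A)$ as $G$-equivariant $\mathbf A$-bundles. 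This shows the pullback map $[X,B_{\Fa}(G,\mathbf A)]_G\to\Bdl_{G,\mathbf A}(X)$ is surjective with a right inverse. For injectivity one runs the same argument relatively: two $G$-maps inducing isomorphic bundles lift to $(G\times\Gamma)$-maps out of $E$ that agree after taking $\Gamma$-quotients, and the uniqueness clause of the classifying property produces the required $G$-homotopy.

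For part (2), I would compute the $H$-fixed points of $B_{\Fa}(G,\mathbf A)=E/\Gamma$, writing $E=E_{\Fa_{\mathbf A}}(G\times\Gamma)$. The key is the general decomposition, valid for any $(G\times\Gamma)$-CW-complex with free $\{1\}\times\Gamma$-action and any $H\leq G$,
$$(E/\Gamma)^{H}\ \cong\ \coprod_{[\phi]} E^{\Delta(\phi)}\big/ C_{\Gamma}(\phi),$$
where the union runs over $\Gamma$-conjugacy classes of homomorphisms $\phi\colon H\to\Gamma$ and $\Delta(\phi)=\{(h,\phi(h))\mid h\in H\}$. This is checked by assigning to a fixed orbit $\Gamma e$ the homomorphism $\phi$ determined by $(h,1)e\in(\{1\}\times\Gamma)\,e$: varying $e$ in its orbit replaces $\phi$ by a $\Gamma$-conjugate, freeness of $\{1\}\times\Gamma$ makes $\phi$ well defined, and the orbits mapping to a fixed class $[\phi]$ form $E^{\Delta(\phi)}/C_{\Gamma}(\phi)$. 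Now $E^{\Delta(\phi)}$ is contractible when $\Delta(\phi)\in\Fa_{\mathbf A}$ and empty otherwise, and among homomorphisms $H\to\Gamma$ exactly the $\Gamma$-conjugates of $\alpha_H$ have graph in $\Fa_{\mathbf A}$. Hence a single term survives, namely $E^{\Delta(\alpha_H)}/C_{\Gamma}(\alpha_H)$; since $\{1\}\times\Gamma$ acts freely on $E$ so does $C_{\Gamma}(\alpha_H)$ on the contractible complex $E^{\Delta(\alpha_H)}$, and the quotient is a model for $BC_{\Gamma}(\alpha_H)$.

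The main obstacle I anticipate is bookkeeping rather than conceptual. One must verify that an arbitrary $G$-equivariant $\mathbf A$-bundle over a $G$-CW-complex $X$ has total space admitting a $(G\times\Gamma)$-CW structure whose isotropy is contained in $\Fa_{\mathbf A}$, and that $\Fa_{\mathbf A}$ is genuinely closed under $(G\times\Gamma)$-conjugation so that it is a legitimate family; both use the compatibility of the family $\mathbf A$. The other delicate point is the identification in part (2) of precisely which classes $[\phi]$ contribute: one needs that a graph subgroup with $G$-part $H$ lies in $\Fa_{\mathbf A}$ if and only if $\phi$ is $\Gamma$-conjugate to $\alpha_H$, which again follows from compatibility together with the fact that $\Gamma$-conjugation corresponds to conjugation by $\{1\}\times\Gamma$.
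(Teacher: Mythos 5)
Your proposal is correct and is essentially the paper's own argument: for (1) the paper likewise applies the classifying property of $E_{\Fa _{\mathbf A}}(G\times \Gamma)$ to the total space and passes to $\Gamma$-quotients to obtain the classifying map, and for (2) it identifies $B_{\Fa}(G,{\mathbf A})^H$ with $E_{\Fa}(G,{\mathbf A})^{\Delta(\alpha _H)}/C_{\Gamma}(\alpha _H)$, a free quotient of a contractible space. Your coproduct decomposition over conjugacy classes $[\phi]$ is the same computation in different packaging: the paper verifies the map $xC \mapsto x\Gamma$ is injective by the same centralizer argument you use to form the quotient, and surjective by the same appeal to compatibility that you invoke to show only the class of $\alpha _H$ contributes.
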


\begin{proof} For the first statement observe that if $E\to X$ is a
$G$-equivariant ${\mathbf A}$-bundle, then by construction there is
a $(G\times \Gamma)$-map from $E$ to $E_{\Fa} (G, {\mathbf A})$.
Since both spaces have free $\Gamma$-action, taking orbit spaces we
get a $G$-map $X \to B_{\Fa} (G, {\mathbf A})$ where the bundle
$E\to X$ is the pullback bundle via this map.

To prove the second statement, let $\alpha_H :H\to \Gamma $ be a
representation and let $C=\{1\}\times C_{\Gamma }(\alpha_H)$. Then
$C$ acts freely on the contractible space $E_{\Fa}(G,{\mathbf
A})^{\Delta (\alpha _H)}$ and
$$E_{\Fa}(G,{\mathbf A})^{\Delta(\alpha_H)}/C\iso
B_{\Fa}(G,{\mathbf A})^H$$ where the homeomorphism is given by
$f(xC)=x\Gamma $ for $x\in E_{\Fa }(G,{\mathbf A})^{\Delta (\alpha
_H)}$. To see that $f$ is an isomorphism, first note that $f$ is
well-defined and the image of $f$ is in $B_{\Fa}(G,{\mathbf A})^H$.
Now, take $x,y \in E_{\Fa} (G, {\mathbf A})$ such that
$f(xC)=f(yC)$. Then, $x=y\gamma$ for some $\gamma \in \Gamma$. Since
$hx=x\alpha_H (h)$ and $hy=y \alpha_H (h)$ for all $h \in H$, we get
$\alpha_H (h)\gamma=\gamma \alpha_H(h)$ for all $h \in H$. Thus
$\gamma \in C_{\G} (\alpha _H)$ and $xC=yC$. This proves that $f$ is
injective. To show that $f$ is surjective, let $x\Gamma \in
B_{\Fa}(G,{\mathbf A})^H $. Then $H\leq G_{x\Gamma }$ and there
exists $\beta : G_{x\Gamma }\to \Gamma $ in ${\mathbf A}$ such that
$(G\times \Gamma )_x=\Delta( \beta)$. Since the family of maps in
${\mathbf A}$ are compatible, there is a $\gamma \in \Gamma $ such
that $c_{\gamma}\circ \beta | _H=\alpha_H $. Then $\Delta (\alpha
_H) \leq (G\times \Gamma )_{x\gamma }$. This means that $x\gamma \in
E_{\Fa} (G, {\mathbf A})^{\Delta (\alpha _H)}$ and a direct
calculation gives $f(x\gamma C)=x\Gamma $. So, $f$ is surjective.
Hence we conclude that $B_{\Fa}(G,{\mathbf A})^H$ is homotopy
equivalent to $BC_{\Gamma} (\alpha _H)$.
\end{proof}

\section{Equivariant Obstruction theory}
\label{section:EquivariantObstructionTheory}

In this section, we fix our notation for Bredon cohomology and state
the main theorem of the equivariant obstruction theory that will be
used in the next section. We refer the reader to \cite{bredon},
\cite{luck1}, and \cite{luck-oliver} for more details.

Let $G$ be a finite group and $\Fa$ be a family of subgroup closed
under conjugation. As before we denote the orbit category of $G$
relative to the family $\Fa$ by $\Or _{\Fa} (G)$. Let $(X,A)$ be a
relative $G$-CW-complex whose all isotropy groups are in $\Fa $. A
coefficient system for Bredon cohomology is a contravariant functor
$M:\Or _{\Fa }(G)\to \cAb $ where  $\cAb $ denotes the category of
abelian groups and group homomorphisms between them. A coefficient
system is sometimes called a $\bbZ\Or _{\Fa} (G)$-module with the
usual convention of modules over a small category. So, morphisms
between $\bbZ\Or _{\Fa} (G)$-modules are given by a natural
transformation of functors. Notice that the $\bbZ\Or _{\Fa}
(G)$-module category is an abelian category, so the usual
constructions of modules over a ring are available to do homological
algebra. To simplify the notation, we call a $\bbZ\Or _{\Fa}
(G)$-module, a $\ZOr$-module.

Now, let us fix some notation for some of the $\ZOr$-modules that we
will be considering. For example, consider the contravariant functor
$\pi _n(X^{?},A^{?}):\Or _{\Fa }(G)\to \cAb$ given by $\pi
_n(X^{?},A^{?})(G/H)=\pi _n(X^{H},A^{H})$ for any object $G/H$ in
$\Or _{\Fa }(G)$ and a morphism $\hat{a}:G/H\to G/K$ in $\Or _{\Fa
}(G)$ defined by $gH\to ga^{-1}K$ is sent to the morphism from $\pi
_n(X^{K},A^{K})$ to $\pi _n(X^{H},A^{H})$ induced by left
multiplication $x\to a^{-1}x$ considered as a map from
$(X^{K},A^{K})$ to $(X^{H},A^{H})$.

Similarly, we set $C_n(X^{?},A^{?};M):\Or _{\Fa }(G)\to \cAb $ as
$$C_n(X^{?},A^{?})(G/H)=C_n(X^H,A^H;\bbZ )$$ and morphisms defined
in a similar way as above. The boundary maps of the chain complexes
$C_*(X^H,A^H;\bbZ )$ commute with conjugation and restriction maps,
so when we put them together, we obtain a chain complex of
$\ZOr$-modules
$$\begin{CD}
\cdots @>>> C_2(X^{?},A^{?}) @>{\partial _1}>> C_1(X^{?},A^{?})
@>{\partial _0}>> C_0(X^{?},A^{?}) @>>> 0.
\end{CD}$$
We define $H_n (X^{?}, A^{?})$ as the cohomology of this chain
complex. Note that the $\ZOr$-module $H_n(X^{?},A^{?};M):\Or _{\Fa
}(G)\to \cAb $ satisfies
$$H_n(X^{?},A^{?})(G/H)=H_n(X^H,A^H;\bbZ ).$$

\begin{definition} Let $(X,A)$ be a relative $G$-CW-complex and $M$
be a $\ZOr$-module. The Bredon cohomology $H_G^*(X,A;M)$ of the pair
$(X,A)$ with coefficients in $M$ is defined as the cohomology of the
cochain complex
$$\begin{CD}
0 @>>> \Hom  _{\ZOr}(C_0(X^{?},A^{?}),M) @>{\delta ^0}>> \Hom _
{\ZOr}(C_1(X^{?},A^{?}),M) @>{\delta ^1 }>>\cdots
\end{CD}$$
\end{definition}

Bredon cohomology is useful to describe obstructions for extending
equivariant maps. Let $(X,A)$ be a relative $G$-CW-complex and $Y$
be a $G$-space such that for all $H\leq G$ the invariant space $Y^H$
is an $(n-1)$-simple space. Assume $f:X^{(n)}\to Y$ is a
$G$-equivariant map. Then we define an element $c_f$ in $\Hom
_{\ZOr}(C_n(X^{?},A^{?}),\pi _{n-1}(Y^{?}))$ for $H\in \Fa $ as
follows: For every $H\in \Fa$, the homomorphism $c_f (H)$ is the map
$$c_f(H): C_n (X^H, A^H) \to \pi _{n-1} (Y^H)$$ which takes $\sigma
\in C_n(X^H, A^H)$ to the homotopy class of the map $f\circ \phi
_{\sigma }:\bbS ^{n-1}\to Y^H$ where $\phi _{\sigma }$ is the
attaching map of the cell $\sigma $ in the following pushout
diagram:
$$\begin{CD}
\partial (\sigma ) @>{\phi _{\sigma }}>> X^{(n-1)}  \\
@V{ }VV  @VV{ }V\\
\sigma @>{}>> X^{(n)}
\end{CD}$$
The cochain $c_f$ is a cocyle by Proposition II.1.1 in
\cite{bredon}. Hence we can define $\obs(f)=[c_f]\in H_G^n(X,A; \pi
_{n-1}(Y^{?}))$. The cohomology class $\obs(f)$ is the obstruction
to extending $f|_{X^{(n-1)}}$ to $X^{(n+1)}$. More precisely:

\begin{proposition}\label{proposition:equivariant_obstruction}
Let $(X,A)$  be a relative $G$-CW-complex and $Y$
 be a $G$-space such that for all $H\leq G$, the invariant
space $Y^H$ is a simple space. Let $f:X^{(n)}\to Y$ be a
$G$-equivariant map. Then $f|_{X^{(n-1)}}$ can be extended to an
equivariant map from $X^{(n+1)}$ to $Y$ if and only if $\obs(f)=0$
in  $H_G^{n+1}(X,A; \pi _{n}(Y^{?}))$.
\end{proposition}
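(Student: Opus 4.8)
The plan is to run the standard equivariant obstruction argument cell by cell, reducing each extension problem over an equivariant cell to a non-equivariant null-homotopy question on fixed points. I work with $f\colon X^{(n)}\to Y$ and the obstruction cochain $c_f$ taking an $(n+1)$-cell $\sigma$ (with attaching map $\phi_\sigma\colon \bbS^n\to X^{(n)}$) to $[f\circ\phi_\sigma]\in\pi_n(Y^{H_\sigma})$, so that $c_f\in\Hom_{\ZOr}(C_{n+1}(X^{?},A^{?}),\pi_n(Y^{?}))$ and $\obs(f)=[c_f]\in H_G^{n+1}(X,A;\pi_n(Y^{?}))$.

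First I would analyze extension over a single equivariant $(n+1)$-cell. Since $(X,A)$ is a relative $G$-CW-complex, $X^{(n+1)}$ is built from $X^{(n)}$ by attaching cells $G/H_\sigma\times\bbD^{n+1}$ along maps $G/H_\sigma\times\bbS^n\to X^{(n)}$. A $G$-map out of $G/H_\sigma\times\bbD^{n+1}$ is determined by its restriction to $\{eH_\sigma\}\times\bbD^{n+1}$, which, as $H_\sigma$ fixes the disk, is just an ordinary map $\bbD^{n+1}\to Y^{H_\sigma}$. Hence $f$ extends $G$-equivariantly over the cell indexed by $\sigma$ if and only if the class $[f\circ\phi_\sigma]\in\pi_n(Y^{H_\sigma})$ vanishes; this is precisely the value $c_f(\sigma)$. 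Assembling over all cells, $f$ extends to a $G$-map on $X^{(n+1)}$ if and only if $c_f=0$ as a cochain.

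Second, I would pass from cochains to cohomology. The cochain $c_f$ is a cocycle (I would invoke Bredon's Proposition II.1.1 for the cocycle identity), so it determines $\obs(f)=[c_f]$. The essential point is that $[c_f]$ depends only on $f|_{X^{(n-1)}}$ and not on its chosen extension over the $n$-cells: given two $G$-maps $f,f'$ on $X^{(n)}$ agreeing on $X^{(n-1)}$, their restrictions to each $n$-cell, compared rel boundary, define a deformation cochain $d(f,f')\in\Hom_{\ZOr}(C_n(X^{?},A^{?}),\pi_n(Y^{?}))$ with $c_f-c_{f'}=\delta\, d(f,f')$, whence $[c_f]=[c_{f'}]$. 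The two implications then follow. If $f|_{X^{(n-1)}}$ extends to a $G$-map $F$ on $X^{(n+1)}$, then $f':=F|_{X^{(n)}}$ is an extension of $f|_{X^{(n-1)}}$ with $c_{f'}=0$, so $\obs(f)=[c_f]=[c_{f'}]=0$. Conversely, if $\obs(f)=0$ then $c_f=\delta\,d$ for some $d$, and using $d$ as a prescription for redefining $f$ on the interiors of the $n$-cells rel $X^{(n-1)}$ produces a new extension $f'$ with $c_{f'}=c_f-\delta\,d=0$, which therefore extends over $X^{(n+1)}$ by the first step.

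I expect the main obstacle to be the equivariant bookkeeping rather than any deep idea: verifying that $c_f$ and the deformation cochain $d(f,f')$ are genuine morphisms of $\ZOr$-modules, i.e.\ natural with respect to the conjugation-and-restriction morphisms $\hat a\colon G/H\to G/K$ in $\Or_{\Fa}(G)$, and that the difference formula $c_f-c_{f'}=\delta\,d(f,f')$ holds at the level of Bredon cochains. Here the hypothesis that each $Y^H$ is simple is crucial: it guarantees that $\pi_n(Y^H)$ is abelian with trivial $\pi_1$-action, so the classes $[f\circ\phi_\sigma]$ are well defined independently of basepoints and paths, making $\pi_n(Y^{?})$ an honest abelian-group-valued coefficient system and all of the above cochains well defined.
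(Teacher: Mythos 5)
Your proposal is correct: the cell-by-cell reduction to null-homotopy in $\pi_n(Y^{H_\sigma})$, the difference (deformation) cochain $d(f,f')$ with $c_f-c_{f'}=\delta\,d(f,f')$, and the realization of an arbitrary cochain by redefining $f$ on the $n$-cells rel $X^{(n-1)}$ constitute exactly the standard equivariant obstruction-theory argument. This is essentially the same approach as the paper's, whose proof consists of citing Proposition II.1.2 of Bredon's \emph{Equivariant cohomology theories} \cite{bredon}, where precisely this argument is carried out.
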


\begin{proof} See Proposition II.1.2 in \cite{bredon}.
\end{proof}

Note that the category of $\ZOr$-modules has enough injectives (see
\cite[pg. 24]{bredon}). Hence for any $\ZOr$-module $M$, there
exists an injective resolution
$$\begin{CD}
0 @>>> M @>{\epsilon }>> I^0   @>{\rho ^0}>> I^1 @>{\rho ^1
}>>\cdots
\end{CD}$$
For a $\ZOr$-module $N$, we define the ext-group $\Ext ^n_{\ZOr}(N,
M )$ as the cohomology of the cochain complex
$$\begin{CD}
0 @>>> \Hom _{\ZOr} (N, I^0)   @>{(\rho ^0)_*}>> \Hom _{\ZOr} (N,
I^1 ) @>{(\rho ^1) _*}>>\cdots
\end{CD}$$

Note that since we already know that the $\ZOr$-module category has
enough projectives, one can also calculate the above ext-groups
using a projective resolutions of $N$.

The following  proposition is used in the next section. We include a
proof of it here for the convenience of the reader. The proof is
given by standard homological algebra and can be found in the
literature (see \cite[Chp1, 10.4]{bredon} or \cite[Chp. 1, Thm
6.2]{may}).

\begin{proposition}\label{proposition:spectral_sequence}
Let $(X,A)$ be a $G$-CW-complex and $\Fa $ be a family of subgroups
of $G$ closed under conjugation such that for all $x \in X$, the
isotropy subgroup $G_x$ is in the family $\Fa $. Then there exists a
spectral sequence
$$E_2^{p,q}=\Ext ^p_{\ZOr}(H_q(X^{?},A^{?}), M ) )
\implies  H_G^{p+q}(X,A; M)$$ where $M$ is an $\ZOr$-module.
\end{proposition}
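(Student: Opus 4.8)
The plan is to realize this as the standard hypercohomology spectral sequence of a double complex, so the whole argument rests on one structural input: the chain modules $C_n(X^?,A^?)$ are \emph{free} (hence projective) as $\ZOr$-modules. Indeed, the $G$-pushout presentation of the $G$-CW-structure shows that the $n$-cells of each fixed-point complex $(X^H,A^H)$ are governed by the equivariant cells indexed by $I_n$, and one checks that $C_n(X^?,A^?)\iso\bigoplus_{\sigma\in I_n}\bbZ[\Or_\Fa(G)(-,G/H_\sigma)]$, a direct sum of representable (free) $\ZOr$-modules. By the Yoneda lemma $\Hom_{\ZOr}(\bbZ[\Or_\Fa(G)(-,G/K)],N)\iso N(G/K)$, so these modules are projective and the functor $\Hom_{\ZOr}(C_n(X^?,A^?),-)$ is exact. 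First I would record this fact carefully, as it is the only place the geometry of $(X,A)$ enters.

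Next I would fix an injective resolution $0\to M\to I^0\to I^1\to\cdots$ of the $\ZOr$-module $M$, as in the definition of $\Ext_{\ZOr}$, and form the first-quadrant double complex $D^{p,q}=\Hom_{\ZOr}(C_q(X^?,A^?),I^p)$, with horizontal differential induced by $I^p\to I^{p+1}$ and vertical differential induced by dualizing $\partial\colon C_{q+1}\to C_q$. Because the complex lives in the first quadrant, both spectral sequences of $D^{\bullet,\bullet}$ converge to the cohomology of the total complex $H^{p+q}(\mathrm{Tot}(D))$, so there is no convergence issue to worry about, and the task reduces to computing the two $E_2$-pages.

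Running the filtration that takes vertical cohomology first, I would use that each $C_q$ is projective: then $\Hom_{\ZOr}(C_q,-)$ is exact, so the column $\Hom_{\ZOr}(C_q,I^\bullet)$ computes $\Ext^p_{\ZOr}(C_q,M)$, which vanishes for $p>0$ and equals $\Hom_{\ZOr}(C_q,M)$ for $p=0$. This collapses the spectral sequence onto the line $p=0$, whose $q$-th cohomology is exactly the Bredon cohomology $H_G^q(X,A;M)$ by definition. Hence $H^{n}(\mathrm{Tot}(D))\iso H_G^{n}(X,A;M)$. Running the other filtration, which takes horizontal cohomology first, I would use that each $I^p$ is injective, so $\Hom_{\ZOr}(-,I^p)$ is exact and commutes with homology; this gives $E_1^{p,q}=\Hom_{\ZOr}(H_q(X^?,A^?),I^p)$, and taking the remaining differential yields $E_2^{p,q}=\Ext^p_{\ZOr}(H_q(X^?,A^?),M)$, again directly from the definition of $\Ext_{\ZOr}$ via the resolution $I^\bullet$. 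Identifying the abutment of this second spectral sequence with the total cohomology computed above completes the proof.

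I expect the only genuine content to be the projectivity of $C_n(X^?,A^?)$ together with the bookkeeping that matches the paper's conventions (injective resolution in the $M$-variable, the variance of $\Hom_{\ZOr}(-,I^p)$, and the indexing that makes $\partial^*$ raise $q$); everything else is the formal machinery of the two spectral sequences of a first-quadrant double complex. The main thing to be careful about is keeping the two filtrations straight, so that the collapsing one produces the abutment $H_G^*(X,A;M)$ and the other produces the $\Ext$-valued $E_2$-page, rather than the reverse.
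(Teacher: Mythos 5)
Your proposal is correct and takes essentially the same route as the paper: the same first-quadrant double complex $D^{p,q}=\Hom_{\ZOr}(C_q(X^?,A^?),I^p)$ built from an injective resolution of $M$, with injectivity of the $I^p$ producing the $\Ext$-valued $E_2$-page and projectivity of the $C_q$ identifying the abutment with $H^*_G(X,A;M)$. The only cosmetic difference is in the abutment identification: you collapse the second spectral sequence of the double complex, while the paper instead invokes a chain homotopy equivalence $\mathrm{Tot}(D^{*,*})\simeq \Hom_{\ZOr}(C_*(X^?,A^?),M)$ (citing Benson); both rest on the same projectivity of $C_n(X^?,A^?)$, which the paper asserts and you correctly justify as a direct sum of representable $\ZOr$-modules via Yoneda.
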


\begin{proof} Let $(C_* (X^?, A^?), \partial)$ denote the chain complex of $(X,A)$
and let
$$\begin{CD}0 @>>> M @>{\epsilon }>> I^0   @>{\rho ^0}>> I^1 @>{\rho ^1
}>>I^2 @>{\rho ^2 }>>\cdots
\end{CD}$$
be an injective resolution of $M$ as a $\ZOr$-module. Define a
double complex
$$D^{p,q}=\Hom_{\ZOr}(C_q(X^?,A^?),I^p) $$
where $d_1: D^{p,q}\to D^{p+1,q}$ is given by $d_1(f)=\rho ^{p}\circ
f$ and $d_2: D^{p,q}\to D^{p,q+1}$ is given by $d_2(f)=(-1)^p\,
f\circ \partial _{q+1}$  for $f\in D^{p,q}$. Now the spectral
sequence of this double complex is in the form
$$E_2^{p,q}=H^p \left( H^q \left( D^{*,*}, d_2\right),d_1 \right)
\implies  H^{p+q}({\rm Tot}(D^{*,*}), d_1+d_2)$$ where ${\rm
Tot}(D^{*,*})$ is the total complex of the double complex $D^{*,*}$
(see page 108 in \cite{bensonII}).

Since $I^p$ is injective for all $p\geq 0$, we have
$$H^q \left( D^{p,*}, d_2\right)=
H^q \left(\Hom_{\ZOr}(C_*(X^?,A^?),I^p) , d_2\right)
=\Hom_{\ZOr}(H_q(X^?,A^?),I^p).$$ Using this and the definition of
ext-groups, we obtain
$$E_2^{p,q}= H^p \left(\Hom_{\ZOr}(H_q(X^?,A^?),I^*),d_1 \right)
=\Ext ^p_{\ZOr}(H_q(X^?,A^?), M ) ).$$

Since $C_q(X^?,A^?)$ is projective as a $\ZOr$-module for all $q\geq
0$, the following two cochain complexes are chain homotopy
equivalent
$$({\rm Tot} (D^{*,*}), d_1+d_2)\simeq (\Hom_{\ZOr}(C_*(X^?,A^?),M),d_2) $$
(see page 45 in \cite{bensonI}). Hence $$ H^{p+q}({\rm
Tot}(D^{*,*}), d_1+d_2)=H_G^{p+q}(X,A; M).$$ Therefore the spectral
sequence for the double complex $D^{*,*}$ gives a spectral sequence
$$E_2^{p,q}=\Ext ^p_{\ZOr}(H_q(X^{?},A^{?}), M ) )
\implies  H_G^{p+q}(X,A; M).$$
\end{proof}

\section{Construction of Equivariant Bundles}
\label{section:ConstructionofEquivariantBundles}

The main theorem of this section is a slightly different version of
a theorem of L\" uck and Oliver \cite[Thm 2.7]{luck-oliver} on
construction of equivariant bundles. This is the theorem that was
mentioned in the introduction and it is the starting point of our
construction of free actions on products of spheres.

Let $\U _k$ be a family of topological groups indexed by positive
integers. Given two maps $f,g: \U_k \to \U_m$, the multiplication of
their homotopy classes $[f]$ and $[g]$ in $[\U _k, \U _l]$ is
defined as the homotopy class of the composition
$$\U _k\overset{\Delta }{\longrightarrow }
\U _k\times \U _k\overset{f\times g}{\longrightarrow }
 \U _m\times \U _m \overset{\mu }{\longrightarrow }\U _m $$
where $\Delta $ denotes the diagonal map and $\mu $ is the
multiplication in $\U _m$.

Let $i_k$ and $j _k$ be injective homomorphisms from $\U _k$ to $\U
_{k+1}$. For $m>k$, let $$i_{k,m}, j_{k,m}: \U_k \to \U_{m}$$ denote
the compositions $i_{m-1}\circ i_{m-2}\circ \ldots \circ i_k $ and
$j_{m-1}\circ j_{m-2}\circ \ldots \circ j_k $ respectively.

\begin{definition}\label{definition:powering tower}
We call a sequence of triples $\left\{(\U _k ,i_k,
j_k)\right\}_{k=1}^{\infty }$ an $r$-powering tower if for each
$k\geq 1 $,  the centralizer of every finite subgroup of $\U _k$ is
a path connected group, and for all $m>k$, we have
$$[i_{k,m}]\simeq
\underset{r^{(m-k)}-\text{many}}{\underbrace{[j_{k,m}]\cdot[j_{k,m}]\cdot
 \ldots \cdot [j_{k,m}]}}.$$
\end{definition}

The main example of a powering tower is the following:

\begin{example}\label{example:Un_powering_tower}
For $k\geq 1$, let $\U_k=U(nr^{k-1})$ and $i_k$ and $j_k$ be the
inclusions from $U(nr^{k-1})$ to $U(nr^k)$ given by $$i_k(A)=\left[
\begin{matrix}
    A &   &  & \\
      & A &  & \\
      &   & \ddots & \\
      &   &  & A \\
\end{matrix} \right]
\text{\ \ \
and \ \ \ }
j_k(A)=\left[
\begin{matrix}
    A &   &  & \\
      & I &  & \\
      &   & \ddots & \\
      &   &  & I \\
\end{matrix} \right].
$$
The centralizer of a finite group in $\U _k=U(nr^{k-1})$ for $k\geq
1$ is isomorphic to a product $\prod _i U(m_i)$ of unitary groups,
hence it is path connected. Let $H_s:[0,1]\to U(nr^k)$ be a path
with the following end points:
$$ H_s(0)=\left[\begin{matrix}
    A &   &  & \\
      & I &  & \\
      &   & \ddots & \\
      &   &  & I \\
\end{matrix} \right]
\text{\ \ \
and \ \ \ }
H_s(1)=\left[
\begin{matrix}
    I &   &        &     &    &  &   \\
      & \ddots &        &     &    &  &       \\
      &   & I &     &    &     & \\
      &   &        &  A  &    &   &   \\
      &   &        &     & I  &   & \\
       &   &        &     &   &  \ddots &  \\
       & & & &  & & I
\end{matrix} \right] \longleftarrow s^{\text{th}}\text{\ position }
$$
Now $\prod_{s=1}^{r}{H_s}$ is a path from $j_k(A)^r$ to $i_k(A)$, so
we get $i_k \simeq (j_k)^r$ for all $k\geq 1$. Applying this
recursively, we obtain  $i_{k,m}\simeq (j_{k,m})^{r^{m-k}}$ for
every $m>k$. Hence $(\U_k, i_k, j_k)$ is an $r$-powering tower.\qed
\end{example}

In our applications, the only $r$-powering tower we consider is the
tower given in the above example. So, one can read the rest of this
section with this particular tower in mind. The reason we keep the
exposition more general is that we believe this more general set up
can be useful for constructing equivariant fibre bundles with fibres
homeomorphic to a product of spheres.

Now we give our main construction.

\begin{theorem}[Compare to Theorem 2.7 in \cite{luck-oliver}]
\label{theorem:LuckOliverconstruction} Let $G$ be a finite group and
$\Fa $ be a family of subgroups of $G$ closed under conjugation.
Suppose that $\Gamma $ is a finite group and
$$ {\mathbf A}=(\alpha _H)\in \lim _{\underset{H\in \Fa }{\longleftarrow
}} \Rep (H,\Gamma ).$$ Let $\left\{(\U _k,i_k,
j_k)\right\}_{k=1}^{\infty }$ be a $|\Gamma |$-powering tower. Then,
for any representation $\rho: \Gamma \to \U _1$ and for any $d\geq
1$, there exist an $m\geq 1$ and a $G$-equivariant $(i_{1,m}\circ
\rho)_* ({\mathbf A})$-bundle
$$\U _m \to E\to E_{\Fa }G^{(d)}$$ which is (non-equivariantly) trivial
as an $\U _m$-principal bundle.
\end{theorem}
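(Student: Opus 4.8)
The plan is to reformulate the statement as a single equivariant lifting problem and solve it with the obstruction theory of Section~\ref{section:EquivariantObstructionTheory}, using the $|\G|$-powering of the tower to annihilate all obstructions once $m$ is taken large.

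First I fix the target. Write ${\mathbf B}_m=(i_{1,m}\circ\rho)_*({\mathbf A})$ and $\beta_H=i_{1,m}\circ\rho\circ\alpha_H\colon H\to\U_m$. By the universal-bundle proposition recalled above, a $G$-equivariant ${\mathbf B}_m$-bundle over $E_{\Fa}G^{(d)}$ is the same datum as a $G$-map $f\colon E_{\Fa}G^{(d)}\to B_{\Fa}(G,{\mathbf B}_m)$, and $B_{\Fa}(G,{\mathbf B}_m)^H\simeq BC_{\U_m}(\beta_H)$ for $H\in\Fa$. Forgetting the $G$-action, the universal bundle over $B_{\Fa}(G,{\mathbf B}_m)$ is an ordinary principal $\U_m$-bundle, classified by a map $\Phi\colon B_{\Fa}(G,{\mathbf B}_m)\to B\U_m$; the underlying bundle of $f$ is then classified by $\Phi\circ f$. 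Thus the theorem asks for $f$ with $\Phi\circ f$ null-homotopic; equivalently, letting $P$ be the $G$-homotopy fibre of $\Phi$ (with $B\U_m$ carrying the trivial $G$-action), I must produce a $G$-map $E_{\Fa}G^{(d)}\to P$. From the fibration $C_{\U_m}(\beta_H)\to\U_m\to\U_m/C_{\U_m}(\beta_H)$ one reads off $P^H\simeq\U_m/C_{\U_m}(\beta_H)$.

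I then build this $G$-map cell by cell over the skeleta of $E_{\Fa}G^{(d)}$. By Proposition~\ref{proposition:equivariant_obstruction} the obstruction to passing the $q$-skeleton lies in $H^{q+1}_G(E_{\Fa}G^{(d)};\underline{\pi_q(P^?)})$, which I attack through the spectral sequence of Proposition~\ref{proposition:spectral_sequence}, whose $E_2$-page is $\Ext^{*}_{\ZOr}(H_*(E_{\Fa}G^{(?)}),\underline{\pi_q(P^?)})$. The path-connectedness hypothesis of the powering tower gives $\pi_0\big(C_{\U_m}(\beta_H)\big)=0$, so each $BC_{\U_m}(\beta_H)$ is simply connected and each $P^H$ is connected with abelian fundamental group; hence the obstruction theory runs with ordinary (untwisted) coefficient systems and no low-degree ambiguity.

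The crux is that these obstruction classes are killed once $m$ is large. Comparing the constructions at successive levels of the tower, the monoid identity $[i_{k,m}]=[j_{k,m}]^{\,|\G|^{m-k}}$ becomes, by the Eckmann--Hilton argument for maps into the topological group $\U_m$, the equality $(i_{k,m})_*=|\G|^{\,m-k}(j_{k,m})_*$ on homotopy groups; on the coefficient systems $\underline{\pi_q(P^?)}$ it multiplies the comparison homomorphisms, and therefore the obstruction classes, by a factor of $|\G|$ per step. On the other hand the entire lifting problem is induced from the finite group $\G$ through $\rho$, so over the finite-dimensional complex $E_{\Fa}G^{(d)}$ every class in sight is torsion of order dividing a fixed power of $|\G|$. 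Choosing $m$ with $|\G|^{m-1}$ exceeding that power therefore annihilates all obstructions, the $G$-map into $P$ exists, and the resulting $(i_{1,m}\circ\rho)_*({\mathbf A})$-bundle is non-equivariantly trivial. For the concrete tower of Example~\ref{example:Un_powering_tower} this torsion-plus-divisibility is transparent: the underlying bundle of the $i_{1,m}$-push-forward is $V^{\oplus|\G|^{m-1}}$ for a rank-$n$ bundle $V$ pulled back from $B\G$, and since $[V]-n$ is a torsion class in $\widetilde K^0(E_{\Fa}G^{(d)})$ of $|\G|$-power order, the identity $|\G|^{m-1}([V]-n)=0$ makes $V^{\oplus|\G|^{m-1}}$ stably, hence (by its large rank) actually, trivial.

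I expect the real difficulty to be exactly this last step: the prescribed equivariant local structure forces the diagonal inclusion $i_{1,m}$, whose effect on homotopy is multiplication by $|\G|^{m-1}$, while only the finiteness of $\G$ supplies the torsion needed for that multiplication to vanish. Reconciling these two facts at the level of obstruction \emph{classes} (not merely coefficient groups) through the spectral sequence of Proposition~\ref{proposition:spectral_sequence}, and verifying that increasing $m$ genuinely multiplies the classes, is the technical heart; the connectivity bookkeeping and the identification of the fibre $P$ are routine by comparison.
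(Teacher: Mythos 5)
Your reduction of the statement to a single $G$-lifting problem into the homotopy fibre $P$ of $B_{\Fa}(G,{\mathbf B}_m)\to B\U_m$ is sound, and the idea that the powering relation multiplies obstructions by $|\G|^{m-k}$ is the right one; but the step you yourself defer to the end --- that every obstruction class is torsion of order dividing a fixed power of $|\G|$ ``because the lifting problem is induced from $\G$'' --- is precisely the content of the theorem, and in your absolute formulation there is no mechanism to prove it. Your obstruction groups are $H^{q+1}_G\bigl(E_{\Fa}G^{(d)};\pi_q(P^?)\bigr)$, computed by the spectral sequence from $E_2$-terms $\Ext^p_{\ZOr}\bigl(H_{q'}((E_{\Fa}G^{(d)})^?),\pi_q(P^?)\bigr)$. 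Here the homology systems of the absolute complex are $H_0=\underline{\bbZ}$ and a large free system in degree $d$ (each fixed set is the $d$-skeleton of a contractible complex), and the coefficients $\pi_q(P^H)\iso\pi_q\bigl(\U_m/C_{\U_m}(\beta_H)\bigr)$ contain free summands: for the tower of Example \ref{example:Un_powering_tower}, $\pi_2(\U_m/C)\iso\ker\bigl(\pi_1(C)\to\pi_1(\U_m)\bigr)$ is free of positive rank whenever the centralizer $C$ has at least two unitary blocks. So terms such as $\Hom_{\ZOr}\bigl(H_d,\pi_{d-1}(P^?)\bigr)$ and $\lim\pi_q(P^?)$ are visibly not $|\G|$-power torsion, and no uniform power of $|\G|$ annihilates the groups in which your obstructions live; the torsion of the \emph{classes} would have to be proved by some other means, and none is offered. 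The paper supplies exactly the missing mechanism, and it is the one place where finiteness of $\G$ enters: it runs the obstruction theory \emph{relative} to $B:=B_{\Fa}(G,{\mathbf A})$ --- the classifying space of the compatible family into the finite group $\G$ itself, which your argument never introduces --- on the mapping cylinder $Z$ of $B\to E_{\Fa}G$, starting from the map $B\to B_1$ induced by $\rho$. Then $H_q(Z^?,B^?)\iso\widetilde{H}_{q-1}(B^?)\iso\widetilde{H}_{q-1}(BC_{\G}(\alpha_?))$ is annihilated by $|\G|$, so by Proposition \ref{proposition:spectral_sequence} every $E_2$-term, hence the whole group $H^{n+1}_G(Z,B;\pi_n(B_m^?))$, is annihilated by $|\G|^{n+2}$ --- a bound independent of $m$ --- and then $\obs(I_{k,m}\circ f_n)=|\G|^{m-k}\,\obs(J_{k,m}\circ f_n)=0$ once $m-k$ is large enough; one then inducts over the skeleta of $Z$.

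Two further problems. First, your closing triviality argument assumes the underlying rank-$n$ bundle is pulled back from $B\G$; this presupposes a $G$-equivariant principal $\G$-bundle over $E_{\Fa}G^{(d)}$, i.e.\ a $G$-map $E_{\Fa}G^{(d)}\to B_{\Fa}(G,{\mathbf A})$, whose existence is exactly what cannot be assumed: the fixed sets $B_{\Fa}(G,{\mathbf A})^H\simeq BC_\G(\alpha_H)$ are not contractible, and realizing ${\mathbf A}$ itself equivariantly at finite level is in general impossible --- that is why one stabilizes along the tower at all. The paper avoids this entirely: it constructs the map on $Z^{(d+1)}$, restricts to $E_{\Fa}G^{(d+1)}$, and obtains non-equivariant triviality over $E_{\Fa}G^{(d)}$ for free, because the inclusion $E_{\Fa}G^{(d)}\hookrightarrow E_{\Fa}G^{(d+1)}$ is non-equivariantly null-homotopic ($E_{\Fa}G$ being contractible). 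Second, a smaller point: Eckmann--Hilton gives $(i_{k,m})_*=|\G|^{m-k}(j_{k,m})_*$ for maps into the topological group $\U_m$ (or its centralizer subgroups), but $P^H\simeq\U_m/C_{\U_m}(\beta_H)$ is a homogeneous space, not an $H$-space, so transporting the powering identity to the coefficient systems $\pi_*(P^?)$ requires an additional naturality argument through the fibration $C_{\U_m}(\beta_H)\to\U_m\to P^H$.
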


\begin{proof}
Let $Z$ be the mapping cylinder of the (unique up to homotopy)
map
$$B_{\Fa}(G,{\mathbf A})\to E_{\Fa }G$$ and let $B$ denote
$B_{\Fa}(G,{\mathbf A})$ in $Z$. Let $${\mathbf A}_m=(i_{1,m}\circ
\rho)_* ({\mathbf A})\text{\ \  and  \ \ }B_m=B_{\Fa}(G, {\mathbf
A}_m )$$ for $m\geq 1$, and let $$f:B\to B_{1}\text{,\ \ }
I_{k,m}:B_{k}\to B_{m}\text{,\  and \ \ } J_{k,m}:B_{k}\to B_{m}$$
be the maps induced respectively by $\rho $, $i_{k,m}$, and
$j_{k,m}$ for $1 \leq k< m$. For any $H\in \Fa $, the space
$B_{1}^H\simeq BC_{\U _1}(\rho\circ \alpha (H))$ is simply
connected. Therefore, we can extend $f$ to a $G$-map $f_2:Z^{(2)}\to
B_{1}$. Assume that we have a $G$-map
$$f_n:Z^{(n)}\to B_{k}$$ for $n\geq 2$ where $k\geq 1$. For $m>k$,
let the elements
$$\obs (I_{k,m}\circ f_n),\, \obs (J_{k,m}\circ f_n)\in
H_G^{n+1}(Z,B; \pi _{n}(B_{m}^{?}))$$ be the obstructions to
extending the restrictions $I_{k,m}\circ f_n|_{Z^{(n-1)}}$ and
$J_{k,m}\circ f_n|_{Z^{(n-1)}}$  to $G$-maps from $Z^{(n+1)}$ to
$B_{m}$ as in Proposition \ref{proposition:equivariant_obstruction}.
Since $\left\{(\U _k,i_k, j_k)\right\}_{k=1}^{\infty }$ is a
$|\Gamma |$-powering tower, we have $$\obs (I_{k,m}\circ
f_n)=|\Gamma |^{m-k} \obs (J_{k,m}\circ f_n)$$ for every $m >k$. By
Proposition \ref{proposition:spectral_sequence}, there is a
cohomology spectral sequence
$$E_2^{p,q}=\Ext^p_{\ZOr}(H_q(Z^?,B^?), \pi _{n}(B_{m}^{?}) )
\implies  H_G^{p+q}(Z,B; \pi _{n}(B_{m}^{?}))$$ and $|\Gamma |$
annihilates $H_q(Z^H,B^H)\iso \widetilde{H}_{q-1}(B^H)\iso
\widetilde{H}_{q-1}(BC_{\Gamma}(\alpha_H))$. Therefore, we can find
an $m>k$ such that
$$\obs (I_{k,m}\circ f_n)=0.$$
This implies that for any $d\geq 1$, there exists an $m>k$ and a
$G$-equivariant $(i_{1,m}\circ \rho)_* ({\mathbf A})$-bundle
$$\begin{CD}
\Gamma _m @>{}>> E @>{}>> E_{\Fa }G^{(d+1)}
\end{CD}$$ which is obtained as the pullback of the bundle
$E_{\Fa}(G,{\mathbf A}_m )\to B_m$ by the composition map
$$\begin{CD}
E_{\Fa }G^{(d+1)} @>>> Z^{(d+1)} @>{f_{d+1}}>> B_m.
\end{CD}$$
Since $E_{\Fa }G$ is contractible, we obtain a trivial $\U
_m$-principal bundle when we pullback this bundle to $E_{\Fa
}G^{(d)}$ by the inclusion map.
\end{proof}

\begin{corollary} \label{corollay:constructing smooth actions}
Let $G$ be a finite group and $M$ be a finite dimensional smooth
manifold with a smooth $G$-action. Let $\Fa$ denote the family of
isotropy subgroups of the $G$ action on $M$. Let $\G$ be a finite
group and
$$ {\mathbf A}=(\alpha _H)\in \lim _{\underset{H\in \Fa }{\longleftarrow
}} \Rep (H,\Gamma )$$ be a family of compatible representations.
Then, for every $\rho : \Gamma \to U(n)$, there exist a positive
integer $N$ and a smooth $G$-action on $M \times \bbS^N$ such that
for every $x \in M$, the $G_x$ action on the sphere $\{x\}\times
\bbS^N$ is given by the linear $G$-action on $\bbS(V^{\oplus k})$
where $V=\rho \circ \alpha_{G_x} $ and $k$ is some positive integer.
\end{corollary}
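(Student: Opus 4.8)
The plan is to feed Theorem \ref{theorem:LuckOliverconstruction} the $|\Gamma|$-powering tower of Example \ref{example:Un_powering_tower}, namely $\U_k = U(n|\Gamma|^{k-1})$ with $i_k, j_k$ the block-diagonal inclusions, and then convert the resulting principal bundle into a sphere bundle over $M$. First I would note that a smooth $G$-manifold admits a $G$-CW structure (by an equivariant smooth triangulation), all of whose isotropy groups lie in $\Fa$ by the very definition of $\Fa$; set $d = \dim M$. Applying Theorem \ref{theorem:LuckOliverconstruction} to $\mathbf{A}$, this tower, the representation $\rho \colon \Gamma \to U(n) = \U_1$, and this $d$ produces an integer $m \geq 1$ together with a $G$-equivariant $(i_{1,m}\circ\rho)_*(\mathbf{A})$-bundle $\U_m \to E \to E_{\Fa}G^{(d)}$ that is non-equivariantly trivial as a $\U_m$-principal bundle.

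Next I would transport this bundle to $M$. By the classifying property of $E_{\Fa}(G)$ (the Proposition following Definition \ref{definition:EFG}, applied with $A = \emptyset$) there is a $G$-map $M \to E_{\Fa}G$, and since $\dim M = d$, equivariant cellular approximation makes it $G$-homotopic to a map $f \colon M \to E_{\Fa}G^{(d)}$. Pulling $E$ back along $f$ yields a $G$-equivariant $(i_{1,m}\circ\rho)_*(\mathbf{A})$-bundle over $M$ that is still non-equivariantly trivial, since pullbacks of trivial bundles are trivial. The key computation is that $i_{1,m}\colon U(n) \to U(n|\Gamma|^{m-1})$ is the $|\Gamma|^{m-1}$-fold block-diagonal inclusion, so $i_{1,m}\circ\rho \iso \rho^{\oplus|\Gamma|^{m-1}}$ as representations of $\Gamma$. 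Hence for $x \in M$ with $G_x = H$, the $\mathbf{A}_m$-bundle condition forces the representation classified over the orbit of $x$ to be $i_{1,m}\circ\rho\circ\alpha_H \iso (\rho\circ\alpha_H)^{\oplus k}$ with $k = |\Gamma|^{m-1}$, that is $V^{\oplus k}$ for $V = \rho\circ\alpha_{G_x}$.

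I would then equip the $G$-vector bundle $E' = f^*E \times_{\U_m} \bbC^{n|\Gamma|^{m-1}}$ with a $G$-invariant Hermitian metric (by averaging) and take its unit sphere bundle $S = \bbS(E')$, a $G$-equivariant bundle over $M$ with fiber $\bbS^N$ where $N = 2n|\Gamma|^{m-1} - 1$. By the previous paragraph the $G_x$-action on the fiber $\{x\}\times\bbS^N$ is exactly the linear action on $\bbS(V^{\oplus k})$, as required. Because the principal bundle $f^*E$ is non-equivariantly trivial, $E'$ is non-equivariantly trivial as a vector bundle, hence so is $S$; choosing such a trivialization identifies the total space of $S$ with $M \times \bbS^N$ and, transporting the $G$-action along this identification, yields a $G$-action on $M \times \bbS^N$ with the stated fiberwise isotropy behavior.

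The main obstacle is that Theorem \ref{theorem:LuckOliverconstruction} delivers a \emph{topological} equivariant bundle over a CW-complex, whereas the corollary demands a \emph{smooth} $G$-action. To resolve this I would first smooth $E'$ equivariantly: its structure group $U(n|\Gamma|^{m-1})$ is a compact Lie group and $G$ is finite, so equivariant smoothing theory endows $E'$ with a smooth $G$-vector bundle structure over the smooth $G$-manifold $M$. Then $S = \bbS(E')$ is a smooth $G$-manifold, and since a smooth vector bundle that is topologically trivial is smoothly trivial, the trivialization above becomes a diffeomorphism $S \diffeo M \times \bbS^N$; transporting the smooth $G$-action across it completes the argument. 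The remaining bookkeeping---that pullback preserves both the $\mathbf{A}_m$-bundle condition and non-equivariant triviality---is routine.
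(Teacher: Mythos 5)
Your proposal is correct and follows essentially the same route as the paper's proof: apply Theorem \ref{theorem:LuckOliverconstruction} with the $|\Gamma|$-powering tower of Example \ref{example:Un_powering_tower}, pull the resulting trivial $\U_m$-bundle back to $M$ along a map $M \to E_{\Fa}G^{(d)}$ with $d \geq \dim M$, pass to the associated vector bundle and its unit sphere bundle, identify the fiber representations as $(\rho\circ\alpha_{G_x})^{\oplus k}$, and then smooth and trivialize to obtain $M \times \bbS^N$. The one step you treat as a black box (``equivariant smoothing theory'') is carried out concretely in the paper: the topological $G$-vector bundle is classified by a map $h\colon M \to BO(2s,V)$ into a finite-dimensional smooth $G$-Grassmannian, where $V$ is a sum of copies of the real regular representation, and $h$ is replaced by a $G$-homotopic smooth $G$-map via Bredon's equivariant approximation theorem, after which a continuous trivialization is upgraded to a smooth one exactly as you indicate.
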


\begin{proof}
Let $\left\{(\U _k,i_k, j_k)\right\}_{k=1}^{\infty }$ be the
$|\Gamma |$-powering tower described in Example
\ref{example:Un_powering_tower}. Then, by Theorem
\ref{theorem:LuckOliverconstruction}, for any $d\geq 1$, there exist
an $m>1$ and a $G$-equivariant $(i_{1,m}\circ \rho)_* ({\mathbf
A})$-bundle
$$\U _m\to E\to E_{\Fa }G^{(d)}$$ which is trivial
as an $\U _m$-principal bundle. Consider the vector bundle
$$\bbC ^s\to  E\times
_{\U _m} \bbC ^s\overset{\pi}{\to} E_{\Fa }G^{(d)}$$ where
$s=n|\Gamma |^{m-1}$. Choose $d$ larger than the dimension of $M$.
We know that the isotropy subgroup of the $G$ action on $M$ are all
in $\Fa$. Hence we have a map $f: M\to E_{\Fa }G^{(d)}$ unique up to
homotopy and we can consider the following pullback
$$\xymatrix{
E \ar[d]^{p} \ar[r]^{}
& E\times
_{\U _m} \bbC ^s \ar[d]^{\pi}  \\
M  \ar[r]^{f\ \ } & E_{\Fa }G^{(d)}. }$$

Let $V$ be the direct sum of infinitely many copies of the regular
representation of $G$ over the real numbers $\bbR$. Let $BO(2s,V)$
denote the $G$-space of $2s$-planes in $V$ and $EO(2s,V)$ denote the
$G$-space whose points are pairs $(W,w)$ where $W$ is a $2s$-plane
in $V$ and $w \in W$. The map $EO(2s,V)\to BO(2s,V)$ defined by
$(W,w)\to W$ gives a $G$-equivariant vector bundle which is the
universal bundle of $2s$-dimensional $G$-equivariant vector bundles.
So we can consider $p$ as a pullback
$$\xymatrix{
E \ar[d]^{p} \ar[r]^{} & EO(2s,V) \ar[d]^{}  \\ M  \ar[r]^{h\ \ \ \
\ } & BO(2s,V) }$$ for some map $h: M \to BO(2s, V)$. In fact, since
$M$ is a finite dimensional manifold, the same is true if we replace
$V$ with a direct sum of $q$ copies of the regular representation
for a large $q$ (see Proposition III.9.3 in \cite{osborn}).

Note that $h$ is $G$-homotopic to a smooth $G$-map (see Theorem
VI.4.2 in \cite{bredon2}), so there is a smooth $G$-equivariant
vector bundle $p':E'\to M$ topologically equivalent to the
$G$-equivariant vector bundle $p:E\to M$. For every $x \in M$, the
$G_x$-action on $\bbS ((p')^{-1} (x))$ is the same as the
$G_x$-action on $\bbS (p^{-1} (x))$ which is given by the linear
$G_x$-action on $\bbS( (\rho \circ \alpha_{G_x})^{\oplus k})$ where
$k$ is some positive integer.

The bundle $p:E\to M$ has a (nonequivariant) topological
trivialization, so does $p': E' \to M$. Now a continuous
trivialization can be replaced by a smooth trivialization leading to
a diffeomorphism $\bbS(E') \diffeo M\times \bbS^N$ where $\bbS(E')$
is the total space of the corresponding sphere bundle and $N=2s-1$.
This is explained in detail in Chapter 4 of \cite{hirsch} (see also
Proposition 6.20 in \cite{lee}).  Note that the differential
structure on the product $M \times \bbS^N$ is the product
differential structure and $\bbS^N$ denotes the standard sphere, not
an exotic one.
\end{proof}

\section{Embedding fusion systems}
\label{section:ConstructionOfFiniteGroupGamma}

A key ingredient in the construction of an equivariant vector bundle
is the existence of a finite group $\G$ and a family of compatible
representations $ {\mathbf A}=(\alpha _H: H \to \Gamma)$. The
compatibility of representations $(\alpha_H)$ means that for each
map $c_g: H \to K$ induced by conjugation $c_g (h)=ghg^{-1}$, there
exists a $\gamma \in \Gamma $ such that the following diagram
commutes:
\begin{equation}\label{equation:compatible}
\xymatrix{H \ar[d]^{c_g} \ar[r]^{\alpha _H}
& \Gamma \ar[d]^{ c_{\gamma }}  \\
K  \ar[r]^{\alpha _K} & \Gamma}
\end{equation}
To find $\G$ and a family of compatible representations, we use an
intermediate finite group $S$ and define $\G$ in terms of $S$ and a
fusion system on $S$. More precisely, we assume that there is a
finite group $S$ and a family of maps $\iota _H: H \to S$ such that
the diagram (\ref{equation:compatible}) above comes from a diagram
of the following form:
\begin{equation}\label{equation:compatible extended}
\xymatrix{ H \ar[d]^{c_g} \ar[r]^{\iota _H} \ar@/^3pc/[rrr]^{\alpha
_H} & \iota _H (H) \ar@{>->}[d]^{f} \ar@{^{(}->}[r] & S
\ar@{^{(}->}[r]^{\iota }
& \Gamma \ar[d]^{ c_{\gamma }}  \\
K  \ar[r]^{\iota _K} \ar@/_3pc/[rrr]_{\alpha _K}
& \iota _K (K)  \ar@{^{(}->}[r]
& S  \ar@{^{(}->}[r]^{\iota }
& \Gamma }
\end{equation}

In general, the monomorphisms $f: \iota _H (H) \to \iota _K (K)$
that complete these diagram do not have to exist, but we assume that
they always exist. In fact, in our applications the maps $\iota_H$
are always injective, so we can take $f$ as the composition
$\iota_K\circ c_g \circ \iota _H ^{-1}$. Note that the monomorphisms
$f:\iota _H (H) \to \iota _K (K)$ do not only depend on the
conjugations $c_g$, but also depend on different choices of maps
$\iota_H$. These monomorphisms between subgroups of $S$ satisfy
certain properties and the best way to study them is via the theory
of abstract fusion systems. We now introduce the terminology of
fusion systems.

\begin{definition} Let $S$ be a finite group. A {\it fusion system} $\Fu$ on $S$
is a category whose objects are subgroups of $S$ and whose morphisms
are injective group homomorphisms where the composition of morphisms
in $\Fu $ is the usual composition of group homomorphisms and where
for every $P,Q \leq S$, the morphism set $\Hom _{\Fu} (P,Q)$
satisfies the following:
\begin{enumerate}
\item $\Hom _S(P,Q) \subseteq \Hom _{\Fu} (P,Q)$ where $\Hom _S
(P,Q)$ is the set of all conjugation homomorphisms induced by
elements in $S$.
\item For every morphism $\varphi$ in $\Hom _{\Fu} (P,Q)$, the
induced group isomorphism $P\to \varphi(P)$ and its inverse are also
morphisms in $\Fu$.
\end{enumerate}
\end{definition}

An obvious example of a fusion system is the fusion system $\Fu _S
(G)$ where $G$ is a finite group, $S$ a subgroup of $G$, and the set
of morphisms $\Hom _{\Fu} (P, Q)$ is defined as the set of all maps
induced by conjugations by elements of $G$. If $\Fu _1$ and $\Fu _2$
are two fusion systems on a group $S$, then we write $\Fu _1
\subseteq \Fu _2$ to mean that all morphisms in $\Fu _1$ are also
morphisms in  $\Fu _2$. We have the following:

\begin{lemma}\label{lemma:bridge} Let $G$ be a finite group and
$\cH$ be a family of subgroups of $G$. Let $S$ be a finite group and
$\{\iota_H : H \to S \ | \ H \in \cH \}$ be a family of maps.
Suppose that $\Fu$ is a fusion system on $S$ such that for every map
$c_g: H \to K$ induced by conjugation, there is a monomorphism $f$
in $\Fu$ such that the following diagram commutes
$$\xymatrix{
H \ar[d]^{c_g} \ar[r]^{\iota _H\ \ } & \iota _H (H) \ar@{>->}[d]^{f}
 \\
K  \ar[r]^{\iota _K\ \ } & \iota _K (K). }
$$
If $\G$ is a finite group which includes $S$ as a subgroup and
satisfies $\Fu \subseteq \Fu _S (\G)$, then the family of maps
$(\alpha _H)$, where $\alpha_H$ is defined as the composition
$$\alpha_H: H \maprt{\iota _H } S \hookrightarrow \G$$ for all $H \in \Fa$, is a compatible
family.
\end{lemma}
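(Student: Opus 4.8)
I want to show that the family $(\alpha_H)$, with $\alpha_H = \iota\circ\iota_H : H \to \G$ (where $\iota : S\hookrightarrow\G$ is the given inclusion), is compatible in the sense of diagram~(\ref{equation:compatible}). So fix an arbitrary conjugation map $c_g : H \to K$ with $g\in G$; I must produce a single element $\gamma\in\G$ making the square with $\alpha_H$, $\alpha_K$, and $c_\gamma$ commute, i.e. $\alpha_K\circ c_g = c_\gamma\circ\alpha_H$ as maps $H\to\G$.

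\smallskip

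\textbf{Key steps.} First I would invoke the hypothesis: for the given $c_g : H\to K$ there is a monomorphism $f\in\Fu$ making the square
$$\xymatrix{
H \ar[d]^{c_g} \ar[r]^{\iota_H} & \iota_H(H) \ar@{>->}[d]^{f} \\
K \ar[r]^{\iota_K} & \iota_K(K)
}$$
commute in $G$ and $\Fu$ respectively; that is, $f\circ\iota_H = \iota_K\circ c_g$ as maps $H\to\iota_K(K)\leq S$. Second, I would use the assumption $\Fu\subseteq\Fu_S(\G)$: since $f$ is a morphism in $\Fu$ and hence in $\Fu_S(\G)$, by definition of $\Fu_S(\G)$ the map $f$ is realized by conjugation in $\G$, so there exists $\gamma\in\G$ with $f(s) = \gamma s\gamma^{-1}$ for all $s\in\iota_H(H)$; in other words $f = c_\gamma|_{\iota_H(H)}$. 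Third, I would simply chase the diagram: for every $h\in H$,
$$\alpha_K\bigl(c_g(h)\bigr) = \iota\bigl(\iota_K(c_g(h))\bigr) = \iota\bigl(f(\iota_H(h))\bigr) = \iota\bigl(\gamma\,\iota_H(h)\,\gamma^{-1}\bigr) = \gamma\,\alpha_H(h)\,\gamma^{-1} = c_\gamma\bigl(\alpha_H(h)\bigr),$$
where the second equality is the commuting hypothesis square, the third is the realization of $f$ by $\gamma$, and the fourth uses that $\iota$ is an inclusion (a group homomorphism) so it intertwines conjugation in $S$ with conjugation in $\G$. This establishes $\alpha_K\circ c_g = c_\gamma\circ\alpha_H$, which is exactly the required commutativity of~(\ref{equation:compatible}).

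\smallskip

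\textbf{Where the content sits.} There is no serious obstacle; the lemma is essentially a bookkeeping statement, and the entire point is that the two hypotheses have been arranged precisely so that they splice together. The only step requiring any care is the translation in the middle: one must recognize that ``$f$ is a morphism in $\Fu_S(\G)$'' means exactly ``$f$ is induced by conjugation by some element of $\G$,'' so that the abstract monomorphism $f$ between subgroups of $S$ becomes a concrete conjugation $c_\gamma$ inside $\G$. I should be slightly careful that $\gamma$ is not unique—different choices differ by an element centralizing $\iota_H(H)$—but uniqueness is not needed, since compatibility only asks for the existence of one such $\gamma$. Finally I would note that this $\gamma$ depends on $g$ (equivalently on the chosen $f$), exactly as permitted by the definition of a compatible family, so the argument produces an element of $\varprojlim_{H\in\Fa}\Rep(H,\G)$ as claimed.
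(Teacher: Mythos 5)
Your proof is correct and is exactly the argument the paper intends: the paper states this lemma without a separate proof, leaving it as the diagram chase implicit in the extended diagram~(\ref{equation:compatible extended}), which is precisely what you carry out explicitly (hypothesis gives $f$ with $f\circ\iota_H=\iota_K\circ c_g$, the inclusion $\Fu\subseteq\Fu_S(\G)$ realizes $f$ as $c_\gamma|_{\iota_H(H)}$ for some $\gamma\in\G$, and splicing yields $\alpha_K\circ c_g=c_\gamma\circ\alpha_H$). Your remarks on non-uniqueness of $\gamma$ and on only needing existence are also consistent with the paper's definition of a compatible family as an element of the inverse limit of $\Rep(H,\G)$.
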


Given a fusion system on $S$, a good way to find a finite group
$\Gamma $ satisfying $\Fu \subseteq \Fu _S (\G)$ is to use certain
$S$-$S$-bisets. Before we explain this construction, we first
introduce some terminology about bisets.

An $S$-$S$-biset $\Omega $ is a non-empty set where $S$ acts both
from right and from left in such a way that for all $s, s' \in S$
and $x\in \Omega $, we have $(sx)s'=s(xs')$. Let $\Omega $ be an
$S$-$S$-biset, $Q$ be a subgroup of $S$, and $\varphi : Q \to S$ be
a monomorphism. Then, we write ${}_Q \Omega  $ to denote the
$Q$-$S$-biset obtained from $\Omega $ by restricting the left
$S$-action to $Q$ and we write ${}_{\varphi} \Omega  $ to denote the
$Q$-$S$-biset obtained from $\Omega $ where the left $Q$-action is
induced by $\varphi $.

We now discuss the construction of the finite group $\Gamma $ for a
given biset. This construction is the same as the construction given
by S. Park in \cite{park} for saturated fusion systems on
$p$-groups. Let $S$ be a finite group and $\Omega $ be an
$S$-$S$-biset. Let $\Gamma _{\Omega }$ denote the group of
automorphisms of the set $\Omega $ preserving the right $S$-action.
Define $\iota :S\to \Gamma _{\Omega }$ as the homomorphism
satisfying $\iota (s)(x)=sx$ for all $x\in \Omega $. If the left
$S$-action on ${\Omega }$ is free and $\Omega $ is non-empty, then
$\iota $ is a monomorphism, hence in that case we can consider $S$
as a subgroup of $\Gamma _{\Omega }$.

\begin{lemma}[Theorem 3, \cite{park}]\label{lemma:FstableBisets} Let $\Omega $
be an $S$-$S$-biset with a free left $S$-action and let $Q$ be a
subgroup of $S$ and $\varphi :Q\to S $ be a monomorphism. Then,
 $ _{\varphi }{\Omega } $ and $ _{Q}{\Omega } $ are
isomorphic as $Q$-$S$-bisets if and only if $\varphi $ is a morphism
in the fusion system $\Fu _S (\Gamma _{\Omega }).$
\end{lemma}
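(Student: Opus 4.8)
I need to prove that for an $S$-$S$-biset $\Omega$ with free left $S$-action, the bisets ${}_\varphi\Omega$ and ${}_Q\Omega$ are isomorphic as $Q$-$S$-bisets if and only if $\varphi\colon Q\to S$ is a morphism in the fusion system $\Fu_S(\Gamma_\Omega)$.

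Let me unpack the definitions. $\Gamma_\Omega$ is the group of right-$S$-set automorphisms of $\Omega$. Since the left $S$-action is free and $\Omega$ nonempty, $\iota\colon S\to\Gamma_\Omega$ is a monomorphism (given), so $S\leq\Gamma_\Omega$. The fusion system $\Fu_S(\Gamma_\Omega)$ has as morphisms $Q\to S$ the conjugation maps $c_\gamma$ for $\gamma\in\Gamma_\Omega$ with $\gamma Q\gamma^{-1}\leq S$ (here $Q$, $S$ viewed inside $\Gamma_\Omega$ via $\iota$). So $\varphi$ being a morphism in $\Fu_S(\Gamma_\Omega)$ means: there exists $\gamma\in\Gamma_\Omega$ such that $\iota(\varphi(q)) = \gamma\,\iota(q)\,\gamma^{-1}$ for all $q\in Q$.

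Now ${}_Q\Omega$ is $\Omega$ with left action restricted to $Q$. ${}_\varphi\Omega$ is $\Omega$ with left $Q$-action given by $q\cdot x = \varphi(q)x$. Both retain the original right $S$-action.

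**The equivalence I want to establish.** A $Q$-$S$-biset map $\theta\colon{}_\varphi\Omega\to{}_Q\Omega$ is a bijection (for iso) of $\Omega$ that commutes with the right $S$-action — so $\theta\in\Gamma_\Omega$ — and intertwines the two left actions: $\theta(\varphi(q)x) = q\,\theta(x)$ for all $q\in Q$, $x\in\Omega$. In terms of $\iota$: $\theta\circ\iota(\varphi(q)) = \iota(q)\circ\theta$ as elements of $\Gamma_\Omega$, i.e. $\iota(\varphi(q)) = \theta^{-1}\iota(q)\theta$ for all $q$. This is precisely saying $\varphi = c_{\theta^{-1}}$ restricted to $Q$, witnessed by $\theta^{-1}\in\Gamma_\Omega$.

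This is essentially a one-paragraph argument once I line up the definitions correctly — the content is just translating "biset isomorphism" into "conjugation in $\Gamma_\Omega$."

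My proof plan:

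\begin{proof}
Recall that $\Gamma _{\Omega }$ is the group of all bijections of $\Omega $ commuting with the right $S$-action, and that we view $S$ as a subgroup of $\Gamma _{\Omega }$ via the monomorphism $\iota :S\to \Gamma _{\Omega }$ defined by $\iota (s)(x)=sx$. By definition, $\varphi $ is a morphism in $\Fu _S (\Gamma _{\Omega })$ if and only if there is an element $\gamma \in \Gamma _{\Omega }$ with $c_{\gamma }\circ \iota |_Q = \iota \circ \varphi $, that is
\begin{equation*}
\iota (\varphi (q)) = \gamma\, \iota (q)\, \gamma ^{-1}\quad \text{for all }q\in Q.
\end{equation*}

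The plan is to show that such a $\gamma $ is the same datum as a $Q$-$S$-biset isomorphism $ _{\varphi }\Omega \iso  _Q \Omega $. A map $\theta :\Omega \to \Omega $ is an isomorphism of $Q$-$S$-bisets from $ _{\varphi }\Omega $ to $ _Q \Omega $ precisely when $\theta $ is a bijection, $\theta $ commutes with the right $S$-action, and $\theta $ intertwines the left actions, i.e.\ $\theta (\varphi (q)x)=q\,\theta (x)$ for all $q\in Q$ and $x\in \Omega $. The first two conditions say exactly that $\theta \in \Gamma _{\Omega }$, while the intertwining condition, written in $\Gamma _{\Omega }$ using $\iota $, becomes $\theta \circ \iota (\varphi (q)) = \iota (q)\circ \theta $, equivalently $\iota (\varphi (q))=\theta ^{-1}\,\iota (q)\,\theta $ for all $q\in Q$.

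Thus, given a morphism $\varphi $ in $\Fu _S (\Gamma _{\Omega })$ with witnessing element $\gamma $, the element $\theta :=\gamma ^{-1}\in \Gamma _{\Omega }$ is a $Q$-$S$-biset isomorphism $ _{\varphi }\Omega \isomto  _Q \Omega $; conversely, any such isomorphism $\theta $ yields the element $\gamma :=\theta ^{-1}\in \Gamma _{\Omega }$ realizing $\varphi $ as a morphism in $\Fu _S (\Gamma _{\Omega })$. This proves the equivalence.
\end{proof}

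The argument above is complete and the only subtle point — the one I would flag as the crux — is getting the translation between the left-action intertwining identity and conjugation in $\Gamma _{\Omega }$ exactly right, including the direction of the inversion: the isomorphism $\theta $ corresponds to $\gamma ^{-1}$, not $\gamma $, because $\theta $ carries the $\varphi $-twisted left action to the straight one. Freeness of the left $S$-action is what guarantees $\iota $ is injective so that the conjugation statement in $\Gamma _{\Omega }$ faithfully encodes the equality of homomorphisms $Q\to S$; without it the "only if" direction could fail to pin down $\varphi $ uniquely. I would double-check that no additional hypothesis (such as $\Omega $ being finite or bifree) is needed, and confirm that the identification of biset maps with elements of $\Gamma _{\Omega }$ uses only the right-$S$-equivariance already built into the definition of $\Gamma _{\Omega }$.
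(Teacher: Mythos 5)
Your proof is correct and follows essentially the same route as the paper's: both translate a $Q$-$S$-biset isomorphism into an element of $\Gamma_{\Omega}$ realizing $\varphi$ as a conjugation, the only cosmetic difference being that the paper takes the isomorphism $\eta\colon {}_{Q}\Omega \to {}_{\varphi}\Omega$ (so that $c_{\eta}|_{Q}=\varphi$ directly), whereas you take $\theta\colon {}_{\varphi}\Omega \to {}_{Q}\Omega$ and obtain $\varphi = c_{\theta^{-1}}|_{Q}$. Your handling of the inversion and of the role of left-freeness (injectivity of $\iota$) matches the paper's argument.
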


\begin{proof}Let $\eta:{} _{Q}{\Omega } \to {}_{\varphi }{\Omega } $
be a function. Note that $\eta$ is a $Q$-$S$-biset isomorphism if
and only if $\eta$ is an element in $\Gamma _{\Omega }$ and the
conjugation $c _{\eta }$ restricted to $Q$ is equal to $\varphi
:Q\to S$. This is because
$$c _{\eta }(q)(x)=\eta (q \eta ^{-1}(x))=\varphi (q)\eta (\eta ^{-1}(x))
=\varphi (q)(x)$$ for all $q\in Q$ and $x\in \Omega $.
\end{proof}

We make the following definition for the situation considered in
Lemma \ref{lemma:FstableBisets}.

\begin{definition}\label{definition:FStableBiset}
Let $\Fu$ be a fusion system on a finite group $S$. Then, a left
free $S$-$S$-biset ${\Omega }$ is called {\it left} $\Fu $-{\it
stable} if for every subgroup $Q \leq S$ and $\varphi \in \Hom_{\Fu}
(Q, S)$, the $Q$-$S$-bisets ${}_Q {\Omega } $ and $ {}_{\varphi}
{\Omega }$ are isomorphic.
\end{definition}

Hence, by Lemma \ref{lemma:FstableBisets}, we have the following.

\begin{theorem}\label{theorem:LeftFStable_FusionSystemInclusion}
Let $\Fu$ be a fusion system on a finite group $S$. If $\Omega $ is
a left $\Fu $-stable $S$-$S$-biset, then $\Fu\subseteq \Fu _S(\Gamma
_{\Omega })$.
\end{theorem}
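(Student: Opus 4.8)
The plan is to reduce everything to the biconditional already established in Lemma~\ref{lemma:FstableBisets} and to unwind the definitions of left $\Fu$-stability and of the fusion system $\Fu_S(\Gamma_{\Omega})$. First I would record that, since $\Omega$ is left $\Fu$-stable, it has a free left $S$-action and is non-empty, so the homomorphism $\iota: S \to \Gamma_{\Omega}$ determined by $\iota(s)(x)=sx$ is injective; this lets us regard $S$ as a subgroup of $\Gamma_{\Omega}$ and makes $\Fu_S(\Gamma_{\Omega})$ well-defined. To prove the inclusion $\Fu \subseteq \Fu_S(\Gamma_{\Omega})$ it then suffices to check that every morphism of $\Fu$ is also a morphism of $\Fu_S(\Gamma_{\Omega})$.

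So I would fix subgroups $P, Q \leq S$ and an arbitrary morphism $\varphi \in \Hom_{\Fu}(P, Q)$, and aim to produce a $\gamma \in \Gamma_{\Omega}$ whose associated conjugation $c_{\gamma}$ restricts on $P$ to $\varphi$. The first step is to promote $\varphi$ to a monomorphism into $S$: composing $\varphi$ with the inclusion $Q \hookrightarrow S$, which lies in $\Fu$ by the first fusion-system axiom since it is conjugation by the identity element, yields a monomorphism $\bar{\varphi}: P \to S$ belonging to $\Hom_{\Fu}(P, S)$ and agreeing with $\varphi$ as a map of sets.

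Next I would apply the defining property of left $\Fu$-stability (Definition~\ref{definition:FStableBiset}) to the subgroup $P$ together with the morphism $\bar{\varphi} \in \Hom_{\Fu}(P, S)$: this furnishes an isomorphism ${}_P \Omega \cong {}_{\bar{\varphi}} \Omega$ of $P$-$S$-bisets. Feeding this isomorphism into Lemma~\ref{lemma:FstableBisets}, with the roles of $Q$ and $\varphi$ there played by $P$ and $\bar{\varphi}$, produces the desired element $\gamma \in \Gamma_{\Omega}$ with $c_{\gamma}|_P = \bar{\varphi}$, that is, $\bar{\varphi}$ is a morphism of $\Fu_S(\Gamma_{\Omega})$. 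Since $\bar{\varphi}$ and $\varphi$ coincide as maps $P \to S$ and $\varphi(P) \leq Q$, the conjugation $c_{\gamma}$ sends $P$ into $Q$ and hence defines a morphism $P \to Q$ in $\Fu_S(\Gamma_{\Omega})$ equal to $\varphi$. As $P$, $Q$, and $\varphi$ were arbitrary, the inclusion of fusion systems follows.

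I do not expect a genuine obstacle, since the real content has been front-loaded into Lemma~\ref{lemma:FstableBisets} and what remains is purely formal. The only point that requires a little care is the bookkeeping between a fusion-system morphism $\varphi: P \to Q$ and the monomorphism $\bar{\varphi}: P \to S$ obtained by post-composing with an inclusion, and then checking that the resulting conjugation $c_{\gamma}$ actually lands in $Q$ rather than merely in $S$; both points are immediate consequences of the fusion-system axioms.
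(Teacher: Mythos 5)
Your proof is correct and takes essentially the same route as the paper: the paper deduces the theorem immediately from Lemma~\ref{lemma:FstableBisets} combined with Definition~\ref{definition:FStableBiset}, exactly as you do. The only material you add is the (correct) bookkeeping that the paper leaves implicit, namely replacing a morphism $\varphi \in \Hom_{\Fu}(P,Q)$ by its composite with the inclusion $Q \hookrightarrow S$ and noting that the resulting conjugation $c_{\gamma}$ carries $P$ into $Q$.
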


This theorem together with Lemma \ref{lemma:bridge} gives an
explicit way to construct a finite group $\Gamma$ and a compatible
family of representations $(\alpha_H: H \to \Gamma)$. Note that if
$\Omega$ is also free as a right $S$-set, then the group $\Gamma$
can be described in a simple way as follows: If $|\Omega/S|=n$, then
$\Gamma$ is the wreath product $S \wr \Sigma _n :=(S\times \cdots
\times S)\rtimes \Sigma_n$ where the product of $S$'s is $n$-fold
and the symmetry group $\Sigma _n$ acts on the product by permuting
the coordinates. The fusion data is encoded in the way $S$ is
embedded in $\Gamma$. In general, the image of $\iota: S \to \G$ is
not in the product $S \times \cdots \times S$ (see \cite{park} for
more details).

For our constructions, we also need to find a representation of
$\Gamma $ such that its restriction via the maps $\alpha _H$ is in a
desired form. For this, we again use $S$ as an intermediate step,
start with a representation $V$ of $S$ and obtain a representation
of $\G$ in terms of $V$.

\begin{definition} Let $V$ be a left $\bbC S$-module and let $\Omega $ be a
$S$-$S$-biset. Then we define $\bbC \Gamma _{\Omega}$-module
$\widetilde V$ as the module
$$\widetilde{V}=\bbC \Omega \otimes _{\bbC S} V$$
where $\bbC \Omega $ is the permutation $\bbC S$-$\bbC S$-bimodule
with basis given by $\Omega$. The left $\bbC \Gamma
_{\Omega}$-action on $\bbC \Omega $ is given by evaluation of the
bijections in $\Gamma _{\Omega}$ at the elements of $\Omega $ and
 $\widetilde{V}$ is considered as a left $\bbC \Gamma
_{\Omega}$-module via this action.
\end{definition}

Note that every transitive $S$-$S$-biset is of the form $S\times _H
S$ for some $H \leq S \times S$, where $S\times _H S $ is the
equivalence class of pairs $(s_1, s_2)$ where $(s_1h_1, s_2)\sim
(s_1, h_2s_2)$ if and only if $(h_1, h_2)\in H$. The left and right
actions are given by usual left and right multiplication in $S$. An
$S$-$S$-biset is called bifree if both left and right $S$ actions
are free. It is clear from the above description that a transitive
bifree $S$-$S$-biset $S \times _H S$ has the property that $H \cap
(S \times 1)=1$ and $H \cap (1 \times S)=1$. Applying Goursat's
theorem, we obtain that $H$ is a graph of an injective map $\varphi
: Q \to S$ where $Q \leq S$. In this case we denote $H$ by
$$\Delta (\varphi )=\{ (s, \varphi(s) ) \ | \ s\in Q \}.$$ So, a bifree
$S$-$S$-biset is a disjoint union of bisets of the form
$S\times _{\Delta (\varphi)} S $ where $\varphi : Q \to S$ is a
monomorphism.

\begin{definition} Let ${\Omega }$ be a finite bifree $S$-$S$-biset.
Then we define the isotropy of ${\Omega }$ as the family
$$\Isot ({\Omega })=
\left\{ \varphi :Q\to S \left|\  S\times _{\Delta (\varphi)} S
\text{ is isomorphic to a transitive summand of }{\Omega } \right.
\right\}.$$
\end{definition}

It is known that every transitive biset can be written as a product
of five basic bisets (see Lemma 2.3.26 in \cite{bouc}). Since
$\Omega$ is bifree, only three of these basic bisets, namely
restriction, isogation, and induction, are needed to write the
transitive summands of $\Omega$ as a composition of basic bisets.
This gives us the following calculation:

\begin{proposition}\label{proposition:IsoOfConstructedRep}
Let $V$ be a left $\bbC S$-module and $\Omega $ be a bifree
$S$-$S$-biset. Let $\widetilde{V}$ be the $\bbC \Gamma
_{\Omega}$-module constructed as above. Then, for $H\leq S$, the
$\bbC H$-module $\Res ^{\Gamma _{\Omega}}_{H} \widetilde{V}$ is a
direct sum of modules in the form
$$\Ind ^{H}_{H\cap  Q^x} \Isog ^*(\varphi \circ c_{x}) \Res
^{S}_{\varphi (\leftexp{x}H\cap Q)} V$$ where $x\in S$ and
$\varphi:Q\to S$ is in $\Isot (\Omega )$.
\end{proposition}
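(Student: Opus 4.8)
The plan is to reduce the computation to the Mackey-type formula for restricting an induced module, after first expressing $\widetilde V$ in terms of the transitive summands of $\Omega$. Since $\widetilde V = \bbC\Omega \otimes_{\bbC S} V$ and the functor $\bbC\Omega \otimes_{\bbC S} (-)$ commutes with direct sums, and since $\Omega$ is bifree, I would begin by writing $\Omega$ as a disjoint union of transitive bifree summands $S \times_{\Delta(\varphi)} S$ with $\varphi : Q \to S$ ranging (with multiplicity) over $\Isot(\Omega)$. This decomposes $\widetilde V$ as a direct sum of $\bbC\Gamma_\Omega$-modules of the form $\bbC[S \times_{\Delta(\varphi)} S] \otimes_{\bbC S} V$, so it suffices to identify the $\bbC H$-module $\Res^{\Gamma_\Omega}_H$ of each such summand.

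The key step is to recognize $S \times_{\Delta(\varphi)} S$, as a biset, as the composition of the three basic bisets named in the paragraph preceding the statement: induction from $Q$ to $S$ on the left, isogation by $\varphi$, and restriction. That is, the transitive bifree summand factors so that tensoring with $V$ over $\bbC S$ realizes, on the level of modules, the composite operation $\Ind^S_{\varphi(Q)} \circ \Isog^*(\varphi) \circ \Res^S_Q$ applied to $V$, using that for bisets built from restriction, isogation, and induction the associated functor on $\bbC$-modules agrees with the corresponding tower of module operations (this is the standard translation between the biset formalism of \cite{bouc} and induction/restriction functors). Granting this, each transitive summand contributes, before restricting to $H$, an induced module $\Ind^S_{\varphi(Q)}\Isog^*(\varphi) V$ sitting inside $\widetilde V$.

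The final step is to apply the Mackey decomposition to $\Res^S_H \Ind^S_{\varphi(Q)}(\,\cdot\,)$. Choosing $x$ to run over a set of $(H,\varphi(Q))$-double coset representatives in $S$, the Mackey formula produces summands indexed by $x$ of the form $\Ind^H_{H \cap \varphi(Q)^x} \, {}^x(\,\cdot\,)$, and after tracking how conjugation by $x$ interacts with the isogation $\Isog^*(\varphi)$ and the restriction built into the summand, one obtains precisely the stated shape $\Ind^H_{H \cap Q^x} \Isog^*(\varphi \circ c_x) \Res^S_{\varphi(\leftexp{x}H \cap Q)} V$. The bookkeeping must confirm that the source subgroup of the inner restriction is $\varphi(\leftexp{x}H \cap Q)$ and that the twisting map is $\varphi \circ c_x$, which amounts to chasing the isomorphism $H \cap \varphi(Q)^x \cong H \cap Q^x$ induced by $\varphi$ and $c_x$.

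The main obstacle I expect is precisely this last bookkeeping: the compatibility between the Mackey double-coset conjugation $c_x$, the isogation twist $\varphi$, and the identification of the relevant intersection subgroups. Getting the indices $H \cap Q^x$ and $\varphi(\leftexp{x}H \cap Q)$ to match up — rather than some conjugate or the image under a slightly different map — requires careful attention to whether $x$ conjugates on the source or target side of $\varphi$, and this is where a sign-style error in the conjugation convention would silently break the formula. I would verify this with a direct computation on a single transitive summand, using that $\bbC[S \times_{\Delta(\varphi)} S]$ has an explicit $\bbC S$-$\bbC S$-basis, before asserting the general direct-sum statement.
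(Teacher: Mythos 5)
Your overall strategy coincides with the paper's: decompose $\Omega$ into transitive bifree summands $S\times_{\Delta(\varphi)}S$, factor each summand into the three basic bisets, and finish with the Mackey formula. However, your factorization of the transitive summand is inverted, and this is a genuine error rather than the harmless bookkeeping you flag at the end. With the paper's conventions ($\Delta(\varphi)=\{(s,\varphi(s))\mid s\in Q\}$, so $(s_1q,s_2)\sim(s_1,\varphi(q)s_2)$), a direct computation on the basis gives
$$\bbC\bigl[S\times_{\Delta(\varphi)}S\bigr]\otimes_{\bbC S}V\;\cong\;
\Ind^S_{Q}\,\Isog^*(\varphi)\,\Res^S_{\varphi(Q)}V,$$
that is, one restricts $V$ to the \emph{image} $\varphi(Q)$, pulls back along $\varphi$, and induces from the \emph{source} $Q$. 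Your displayed composite $\Ind^S_{\varphi(Q)}\circ\Isog^*(\varphi)\circ\Res^S_Q$ does the opposite; it does not even typecheck as written (since $\Isog^*(\varphi)$ takes $\varphi(Q)$-modules to $Q$-modules), and read charitably it is the module functor of the opposite biset $S\times_{\Delta(\varphi^{-1})}S$, not of $S\times_{\Delta(\varphi)}S$. Note that your own prose ("induction from $Q$ to $S$ on the left") contradicts your formula.

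The inversion is fatal for your Mackey step. Applying Mackey to $\Res^S_H\Ind^S_{\varphi(Q)}$, as you propose, produces summands induced from the subgroups $H\cap\varphi(Q)^x$, and no chase of twists can convert $\Ind^H_{H\cap\varphi(Q)^x}$ into $\Ind^H_{H\cap Q^x}$: induction depends on the actual subgroup of $H$, not on an abstract isomorphism, and $H\cap\varphi(Q)^x$ and $H\cap Q^x$ need not be conjugate in $H$, nor even of the same order. For instance, take $S=C_2\times C_2=\langle a\rangle\times\langle b\rangle$, $Q=H=\langle a\rangle$, $\varphi(a)=b$, $x=1$: then $H\cap Q=Q$ has order $2$, while $H\cap\varphi(Q)=\{1\}$. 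Once the factorization is corrected, the argument closes exactly as in the paper: Mackey applied to $\Res^S_H\Ind^S_Q$ yields summands
$\Ind^H_{H\cap Q^x}\,\Isog^*(c_x)\,\Res^Q_{\leftexp{x}H\cap Q}$ applied to $\Isog^*(\varphi)\Res^S_{\varphi(Q)}V$, and collapsing the middle terms gives
$\Ind^H_{H\cap Q^x}\,\Isog^*(\varphi\circ c_x)\,\Res^S_{\varphi(\leftexp{x}H\cap Q)}V$, which is the stated form.
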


\begin{proof} By writing the transitive summands of $\Omega$ as a
composition of the three basic bisets, we can express
$$\Res ^{\Gamma _{\Omega}}_{S} \widetilde{V}= \bbC  \Omega
\otimes_{\bbC S} V$$ as a direct sum of $\bbC S$-modules in the form
$$\Ind ^{S}_{Q} \Isog ^* (\varphi ) \Res ^{S}_{\varphi (Q)} V$$ where
$\varphi:Q\to S$ is in $\Isot (\Omega )$. Note that $\Isog ^*
(\varphi)$ is the contravariant isogation defined by $\Isog ^*
(\varphi) (M)=\varphi^* (M)$ where  $M$ is a $\varphi (Q)$-module.

Let $H$ be a subgroup of $S$. Then, the $\bbC H$-module $\Res
^{\Gamma _{\Omega}}_{H} \widetilde{V}$ is a direct sum of $\bbC
H$-modules in the form
$$\Res ^{S}_{H} \Ind ^{S}_{Q} \Isog ^* (\varphi )\Res ^{S}_{\varphi (Q)}
V.$$ Using the Mackey decomposition formula, we can decompose $\Res
^S _H \Ind ^S _Q$ further. We obtain  a direct sum with summands of
the form
$$\Ind ^{H}_{H\cap Q^x}\Isog^* (c_{x})\Res ^{Q}_{\leftexp{x}H\cap Q}
\Isog ^* (\varphi )\Res ^{S}_{\varphi (Q)} V$$ which is isomorphic
to
$$\Ind ^{H}_{H\cap  Q^x} \Isog ^*(\varphi \circ c_{x}) \Res
^{S}_{\varphi (\leftexp{x}H\cap Q)} V.$$ This completes the proof.
\end{proof}

This proposition shows that if we want to use this method of
construction of a finite group $\G$ using a left $\Fu$-stable biset
$\Omega$, we need to put some restrictions on the isotropy subgroups
of $\Omega$. The existence of left $\Fu$-stable bisets with certain
restrictions on their isotropy subgroups is an interesting problem
and we plan to discuss this in a future paper. For the main theorems
of this paper, it is possible to avoid this discussion by finding
specific bisets with desired properties using ad hoc methods. These
bisets will be described in the next section.

\section{Constructions of free actions on products of spheres}
\label{section:ConstructionsofFreeActions}

In this section, we prove our main theorems, Theorem
\ref{theorem:RankTwoPGroups} and
\ref{theorem:ElementaryAbelianIsotropy}, stated in the introduction.
We will first prove Theorem \ref{theorem:RankTwoPGroups} which
states that a $p$-group $G$ acts freely and smoothly on a product of
two spheres if and only if $\rk (G)\leq 2$. We start with a
well-known lemma which is often used as a starting point for
constructing free actions.

\begin{lemma}\label{lemma:bigcenter}
Let $G$ be a $p$-group with $\rk G=r$. If $\rk Z(G)=k$, then $G$
acts smoothly on a product of $k$-many spheres with isotropy
subgroups having rank at most $r-k$.
\end{lemma}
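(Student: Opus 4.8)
The plan is to use the center of $G$ to produce an action on a product of spheres via a representation, and then identify the isotropy subgroups. Since $\rk Z(G) = k$, the center $Z(G)$ contains a subgroup isomorphic to $(\bbZ/p)^k$; choose a maximal elementary abelian subgroup $Z \leq Z(G)$ of rank $k$. The idea is to build an action on a product $\bbS(V_1) \times \dots \times \bbS(V_k)$ of $k$ representation spheres so that an element lies in an isotropy subgroup only if it acts trivially on all $k$ spheres simultaneously, and to arrange that the only elements of $Z$ acting trivially everywhere form the identity.

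First I would choose characters or representations that detect $Z$ faithfully. Concretely, pick $k$ one-dimensional (or small) complex representations $\chi_1, \dots, \chi_k$ of $G$ whose restrictions to $Z \cong (\bbZ/p)^k$ together give a faithful representation of $Z$; equivalently, the joint kernel of the $\chi_i$ restricted to $Z$ is trivial. For each $i$, form a $G$-representation $V_i$ containing $\chi_i$ (for instance an induced representation or a suitable irreducible whose central character on $Z$ is $\chi_i$) and let $G$ act on the unit sphere $\bbS(V_i)$. The product action on $\bbS(V_1)\times \dots \times \bbS(V_k)$ is smooth since it is linear on each factor. The key computation is to control the isotropy subgroups $H = G_x$ of a point $x = (x_1, \dots, x_k)$: such an $H$ must fix each $x_i$, so $H$ embeds into the product of the stabilizers on each factor, and in particular $H \cap Z$ must act trivially on each $\bbS(V_i)$ in a way compatible with the chosen characters.

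The main step, and the place I expect the real content to lie, is bounding the rank of the isotropy subgroups by $r - k$. The mechanism is that $Z$ acts on the product so that its action ``uses up'' $k$ independent directions: because the central characters were chosen to make $Z$ act faithfully across the $k$ factors, no nontrivial element of $Z$ can lie in an isotropy subgroup, so every isotropy subgroup $H$ satisfies $H \cap Z = 1$. Then for any elementary abelian subgroup $E \leq H$, the subgroup $E \times Z$ (or more carefully, the subgroup generated by $E$ and $Z$, which is elementary abelian of rank $\rk E + k$ since $Z$ is central of rank $k$ and meets $E$ trivially) is an elementary abelian subgroup of $G$, so $\rk E + k \leq \rk G = r$. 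This forces $\rk E \leq r - k$ for every elementary abelian $E \leq H$, i.e.\ $\rk H \leq r - k$, as desired.

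The hard part will be making the representations $V_i$ genuinely detect the full rank $k$ of $Z$ while keeping the action linear and smooth, and then verifying rigorously that $H \cap Z = 1$ for every isotropy subgroup $H$: one must ensure that the joint action of $Z$ on the product of spheres is free, which amounts to checking that the chosen family of central characters $\{\chi_i|_Z\}$ separates points of $Z \cong (\bbZ/p)^k$ in the appropriate sense (no nonidentity central element fixes a point on every factor). Once $H \cap Z = 1$ is secured, the rank bound follows from the elementary group-theoretic observation above about centrally extending an elementary abelian subgroup by $Z$. I would organize the proof by first constructing the representations, then proving freeness of the $Z$-action on the product (hence $H \cap Z = 1$), and finally deducing the rank inequality.
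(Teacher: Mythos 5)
Your proposal is correct and takes essentially the same route as the paper: there, one picks jointly faithful characters $\chi_j$ of the full center $Z(G)$, sets $\theta_j=\Ind^G_{Z(G)}(\chi_j)$ (whose restriction to $Z(G)$ is automatically $\chi_j$-isotypic because $Z(G)$ is central), lets $G$ act diagonally on $\bbS(\theta_1)\times\cdots\times\bbS(\theta_k)$, concludes that every isotropy subgroup $H$ satisfies $H\cap Z(G)=1$, and gets $\rk H+k\le r$ since $HZ(G)\iso H\times Z(G)$ is a subgroup of $G$. The only caveat: your opening idea of using one-dimensional representations of $G$ cannot work in general (for an extraspecial $p$-group the center lies in $[G,G]$, so every such representation kills $Z$), and ``$V_i$ containing $\chi_i$'' is too weak (you need $\Res_Z V_i$ to be $\chi_i$-isotypic, not merely to contain $\chi_i$), but your parenthetical fallback --- induced representations, or irreducibles with prescribed central character --- is exactly the paper's choice and makes the freeness argument go through.
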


\begin{proof} Let the center of $G$ be of the form $Z(G)\cong \bbZ /p^{n_1}\times
\dots \times \bbZ / p^{n_k}$ with generators $a_1, \dots ,a_k$. For
$j\in \{1,2,\dots, k\}$, let $\chi _j:Z(G)\to\bbC $ denote the
one-dimensional representation of $Z(G)$ defined by $a_j\mapsto
e^{2\pi i/p^{n_j}}$, and $a_{j'}\mapsto 1$ for $j'\ne j$. Let
$\theta _j=\Ind ^G_{Z(G)}(\chi _j)$. Define $M=\bbS(\theta _1)\times
\cdots \times \bbS(\theta _k)$ with the diagonal $G$-action. Note
that $Z(G)$ acts freely on $M$, so if $H$ is an isotropy subgroup of
$G$, then we must have $H \cap Z(G)=\{1\}$. Thus, $H Z(G)\iso H
\times Z(G)$ is a subgroup of $G$. This proves that $k+ \rk H \leq
r$.
\end{proof}

The above lemma, in particular, says that if $\rk G=r$ and $\rk
Z(G)=r-1$, then $G$ acts smoothly on a product of $(r-1)$-many
spheres with rank one isotropy subgroups. When $p$ is odd, all rank
one $p$-groups are cyclic. In the case of $2$-groups, in addition to
cyclic groups, we also have the family of generalized quaternions
$Q_{2^n}$ where $n\geq 3$. In either case, given a finite collection
rank one $p$-groups, we can find a maximal rank one $p$-group into
which all other rank one $p$-groups can be embedded. In the proof of
Theorem \ref{theorem:RankTwoPGroups}, $S$ will be this maximal rank
one $p$-group. So, for $p$ is odd, $S$ will be a cyclic group of
order $p^N$ and for $p=2$, it will be a quaternion group $Q_{2^N}$
where $N$ is a large enough positive integer.

As a fusion system on $S$ we will always consider the fusion system
$\Fu$ where all the monomorphisms between subgroups of $S$ are in
$\Fu$. For this $S$ and $\cF$, we construct left $\Fu$-stable bisets
with reasonable isotropy structures. We construct these bisets using
a more general lemma. Before we state this lemma, we introduce some
definitions and notations.

\begin{definition} Let $\Fu $ be a fusion system on a finite group $S$.
Then we say $K$ is an $\Fu $-{\it characteristic} subgroup of $S$ if
for any subgroup $L\leq K$ and for any morphism $\varphi : L \to S $
in $\Fu$, there exists a morphism $\widetilde{\varphi }:K\to K$  in
$\Fu$ such that $\widetilde{\varphi }(l)=\varphi (l)$ for all $l \in
L$.
\end{definition}

We define $\Out _{\Fu} (S)$ as the quotient $\Aut _{\Fu} (S)/\Inn
(S)$. Notice that for $\varphi $ in $\Aut _{\Fu} (S)$, the
isomorphism class of the $S$-$S$-biset $S\times _{\Delta (\varphi)}
S $ depends only on the left coset $\varphi \Inn (S)$ in $\Out
_{\Fu} (S)$ since we can define an isomorphism from  $S\times
_{\Delta (\varphi)} S $ to $S\times _{\Delta (\varphi \circ c_x )} S
$ given by $[s_1,s_2]\mapsto [s_1x,s_2]$ for $x \in S$.

\begin{lemma} \label{lemma:Fchar_leftFbiset} Let $\Fu $ be a fusion system
on a finite group $S$ and $K$ be an $\Fu $-characteristic subgroup
of $S$. Assume that $\Omega $ is the $S$-$S$-biset defined as
follows
$$\Omega =\coprod _{\varphi \in \Out_{\Fu }(K)} S\times _{\Delta(\varphi )} S.$$
Then the $S$-$S$-biset $\Omega $ is left $\Fu $-stable.
\end{lemma}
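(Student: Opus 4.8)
The plan is to verify the defining condition of left $\Fu$-stability head on: for each $Q\le S$ and each $\psi\in\Hom_\Fu(Q,S)$, I would show that the $Q$-$S$-bisets ${}_\psi\Omega$ and ${}_Q\Omega$ are isomorphic. Since both are bifree, each decomposes uniquely into transitive summands $Q\times_{\Delta(\lambda)}S$, and an isomorphism is the same as an equality of the two multisets of summand types $(P,\lambda)$ (source up to $Q$-conjugacy, $\lambda$ up to $S$-conjugacy). So I will compute both decompositions and exhibit a type-preserving bijection between their summands.

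First I would record a structural consequence of the hypothesis: being $\Fu$-characteristic forces $K$ to be strongly $\Fu$-closed, hence normal in $S$. Indeed, if $\widetilde\varphi\colon K\to K$ extends an $\Fu$-morphism $\varphi\colon L\to S$ with $L\le K$, then $\varphi(L)=\widetilde\varphi(L)\le K$; taking $\varphi=c_t|_K$ for $t\in S$ gives ${}^tK\le K$, so $K\lhd S$. For any left restriction $\mu\colon Q\to S$, the biset analogue of the Mackey decomposition in Proposition \ref{proposition:IsoOfConstructedRep} then reads
$${}_\mu\bigl(S\times_{\Delta(\varphi)}S\bigr)\cong\coprod_{s\in\mu(Q)\backslash S/K}Q\times_{\Delta(\lambda_{s})}S,\qquad \lambda_{s}=\varphi\circ c_{s}^{-1}\circ\mu\big|_{P},$$
where, because $K$ is normal, the source $P=\mu^{-1}(K)\cap Q$ is the same for every double coset. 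Applying this with $\mu=\iota$ (the inclusion) and with $\mu=\psi$, and using strong closedness for both $\psi$ and $\psi^{-1}$, I would check that in both cases $P=Q\cap K\le K$, and that $\psi$ restricts to an $\Fu$-isomorphism $Q\cap K\isomto\psi(Q)\cap K$. In particular $Q/(Q\cap K)\iso \psi(Q)/(\psi(Q)\cap K)$, so $[S:QK]=[S:\psi(Q)K]$ and the two double-coset sets $Q\backslash S/K$ and $\psi(Q)\backslash S/K$ have equal cardinality. Thus ${}_\psi\Omega$ and ${}_Q\Omega$ both consist of $|\Out_\Fu(K)|\cdot[S:QK]$ transitive summands, all with source $Q\cap K$.

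The crux is matching the maps $\lambda$. Fix any bijection $\theta\colon \psi(Q)\backslash S/K\to Q\backslash S/K$ and write $t=\theta(s)$. Setting $\gamma_{s}:=\bigl(c_{s}^{-1}\circ\psi\bigr)\circ\bigl(c_{t}^{-1}|_{P}\bigr)^{-1}$, an $\Fu$-isomorphism between the subgroups $c_t^{-1}(P)$ and $c_s^{-1}(\psi(P))$ of $K$, the $\Fu$-characteristic property extends it to some $\widetilde{\gamma_{s}}\in\Aut_\Fu(K)$. Then $c_s^{-1}\circ\psi=\widetilde{\gamma_s}\circ c_t^{-1}$ on $P$, so the $\psi$-summand indexed by $(\varphi,s)$ has $\lambda_s=\varphi\circ\widetilde{\gamma_s}\circ c_t^{-1}|_P$, which is precisely the $\iota$-summand indexed by $(\varphi\circ\widetilde{\gamma_s},\,t)$. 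I would then define the index bijection $(\varphi,s)\mapsto\bigl([\varphi\circ\widetilde{\gamma_s}],\theta(s)\bigr)$: for fixed $s$ the assignment $[\varphi]\mapsto[\varphi\circ\widetilde{\gamma_s}]$ is right translation in the group $\Out_\Fu(K)$, hence a bijection, and $\theta$ bijects the double cosets, so the whole map is a bijection onto the index set of ${}_Q\Omega$. Corresponding summands have identical source and identical $\lambda$, so they are isomorphic; this gives ${}_\psi\Omega\cong{}_Q\Omega$. (Here I use the observation preceding the lemma that $S\times_{\Delta(\varphi)}S$ depends only on the class of $\varphi$ in $\Out_\Fu(K)$.)

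The main obstacle is exactly this reindexing: one cannot pair summands with the same internal parameter $\varphi$, because passing from the $Q$-restriction to the $\psi$-restriction twists $\lambda$ by the fusion of $\psi$. Two features of the hypothesis resolve this: (i) $\Fu$-characteristicity, which both pins every source to the single subgroup $Q\cap K$ and supplies the automorphism $\widetilde{\gamma_s}$ realizing the twist inside $\Aut_\Fu(K)$; and (ii) the summation over the whole group $\Out_\Fu(K)$, which turns post-composition by $\widetilde{\gamma_s}$ into a bijection of the index set rather than a mere injection. The only routine-but-delicate bookkeeping is pinning down the exact Mackey coefficients $\lambda_s$ together with the inverse/side conventions hidden in $\Delta(\lambda)$.
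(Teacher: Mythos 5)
Your proof is correct, but it takes a genuinely different route from the paper's. The paper never decomposes the restricted bisets: it views a $Q$-$S$-biset as a left $(Q\times S)$-set and invokes the standard counting criterion that two finite $(Q\times S)$-sets are isomorphic if and only if they have equal numbers of fixed points for every subgroup $H\leq Q\times S$. The only subgroups with nonzero fixed points on $\Omega$ are graphs $\Delta(\theta)$ of $\Fu$-morphisms $\theta\colon L\to S$ with $L\leq K$; for such $H$, the $\Fu$-characteristic property extends $\psi|_L$ to a morphism $\widetilde{\psi}\colon K\to K$ in $\Fu$, and a separately proved isomorphism ${}_K\Omega\iso{}_{\widetilde{\psi}}\Omega$ (the translation trick: composing with $\widetilde{\psi}^{-1}$ permutes the index set $\Out_{\Fu}(K)$) gives $|\Omega^H|=|\Omega^{H_{\psi}}|$. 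You instead make everything explicit: you derive that $K$ is normal in $S$ (a fact the paper never states, though it is implicitly behind its claim about which subgroups can have fixed points), decompose ${}_Q\Omega$ and ${}_{\psi}\Omega$ via the Mackey formula into transitive summands, all with source $Q\cap K$, and match summands through an index bijection built from the extensions $\widetilde{\gamma_s}$ and right translation in the group $\Out_{\Fu}(K)$. Both arguments hinge on the same two ingredients --- extendability of $\Fu$-morphisms on subgroups of $K$ to $\Aut_{\Fu}(K)$, and the fact that the coproduct runs over all of $\Out_{\Fu}(K)$ so that translation by the extension is a bijection of indices --- so in essence you have traded the paper's counting argument for a constructive one. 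What the paper's route buys is brevity and robustness: the fixed-point criterion sidesteps exactly the double-coset and $\Delta(\lambda)$-convention bookkeeping you flag as delicate, at the price of inferring the isomorphism from cardinalities rather than exhibiting it. What your route buys is an explicit isomorphism and extra structural information ($K$ normal, $[S:QK]=[S:\psi(Q)K]$, the precise twist $\lambda_s=\varphi\circ\widetilde{\gamma_s}\circ c_t^{-1}|_P$). One small point of precision: transitive bifree summands are classified by conjugacy of their graphs in $Q\times S$ (simultaneous pre-conjugation by $Q$ and post-conjugation by $S$), not quite ``source up to $Q$-conjugacy and $\lambda$ up to $S$-conjugacy'' separately; this is harmless here since your matching produces summands whose defining maps agree up to the representative chosen in $\Out_{\Fu}(K)$, which is covered by the observation preceding the lemma that you cite.
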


\begin{proof}
First note that for any $\Fu $-morphism $\psi :K \to S$, the
$K$-$S$-bisets $_{K}\Omega $ and ${}_{\psi} \Omega $ are isomorphic.
We can prove this by considering the isomorphism from the
$K$-$S$-biset $S \times _{\Delta (\varphi )} S$ to the $K$-$S$-biset
${}_{\psi} (S \times _{\Delta (\varphi \circ \psi^{-1} )} S)$ given
by $[s_1,s_2]\mapsto [\psi (s_1), s_2]$. Here notice that $\psi :K
\to S$ implies $\psi (K)=K$ since $K$ is an $\Fu $-characteristic
subgroup of $S$. So, $\varphi \circ \psi^{-1}$ is also in $\Aut
_{\Fu} (K)$.

Now take any subgroup $Q \leq S$ and $\psi \in \Hom_{\Fu} (Q, S)$.
We want to show that $Q$-$S$-bisets ${}_Q {\Omega } $ and $
{}_{\psi} {\Omega }$ are isomorphic. We can think of a $Q$-$S$-biset
as a left $(Q\times S)$-set by defining the left $(Q\times
S)$-action by $(q, s)x=q x s^{-1}$ for all $q\in Q $ and $s \in S$.
This allows us to apply the usual theory of left sets to bisets. In
particular, to show that ${}_Q {\Omega } $ and $ {}_{\psi} {\Omega
}$ are isomorphic, it is enough to show that for every $H \leq
Q\times S$, the number of fixed points of left $H$- and
$H_{\psi}$-actions on $\Omega$ are equal where $H_{\psi}=\{(\psi
(x),y) \vv (x,y)\in H \}.$

Take any subgroup $H \leq Q\times S$. If $H$ is not a group in the
form $\Delta(\theta )$, where  $\theta :L\to S$ is a $\Fu $-morphism
and $L$ is a subgroup of $K$, then  $|\Omega ^H|=0=|\Omega
^{H_{\psi}}|$. If $H$ is a group in the form $\Delta(\theta)$ where
$\theta :L\to S$ is a $\Fu $-morphism and $L$ is a subgroup of $K$,
then there exists a morphism $\widetilde{\psi }:K\to K$ in $\Fu$
such that $\widetilde{\psi }(l)=\psi (l)$ for all $l\in L$. This
implies that $|\Omega ^H|=|\Omega ^{H_{\psi}}|$ because
$K$-$S$-bisets $_{K}\Omega $ and ${}_{\widetilde{\psi }} \Omega $
are isomorphic. From these we can conclude that $Q$-$S$-bisets ${}_Q
{\Omega } $ and $ {}_{\psi} {\Omega }$ are isomorphic.
\end{proof}

Lemma \ref{lemma:Fchar_leftFbiset} is used to show that the bisets
given in the following two examples are left $\Fu$-stable.

\begin{example}\label{example:rankonePgroup}
Let $p$ be a prime number and $S$ be the cyclic group of order $p^N$
where $N>1$. Let $\Fu $ be the fusion system on $S$ such that all
monomorphisms between the subgroups of $S$ are morphisms in $\Fu$.
Define
$$\Omega =\coprod _{\varphi \in \Aut(S)} S\times _{\Delta(\varphi )} S.$$
By Lemma \ref{lemma:Fchar_leftFbiset}, it is clear that $\Omega $ is
left $\Fu $-stable. So, by Theorem
\ref{theorem:LeftFStable_FusionSystemInclusion}, $\Fu \subseteq \Fu
_S (\Gamma _{\Omega})$. This implies that if a finite group $G$ acts
on a space with cyclic $p$-group isotropy, then its isotropy
subgroups can be embedded in $\G _{\Omega}$ in a compatible way. To
obtain the maps $\alpha_H : H \to \G _{\Omega}$, we first choose a
family of injective maps $\iota _H : H \to S$ for all isotropy
subgroups $H$, then we apply Lemma \ref{lemma:bridge} to conclude
that the compositions $\alpha_H=\iota \circ \iota _H$ form a
compatible family of maps.

As a representation $V$ of $S$, we can take the one dimensional
complex representation given by multiplication with the $p^N$-th
root of unity. Then, for every $H\leq S$, the representation $
\Res^{\G} _H \widetilde V$ is isomorphic to the direct sum
$\oplus_{\varphi} \Res^S_H \varphi^* (V)$, hence $H$ acts freely on
$\bbS (\widetilde V)$.\qed
\end{example}

\begin{remark} Note that the finite group $\Gamma _{\Omega}$ that is
constructed in the above example is exactly the same as the
construction given in Section 4.2 of \cite{unlu}. To see this, note
that the group $\Gamma _{\Omega }$ constructed above can be
expressed as wreath product $S \wr \Sigma _n $ where $n=|\Aut (S)|$.
We can write a specific group isomorphism $\Gamma _{\Omega} \to S
\wr \Sigma _n$ as follows: Observe that there is a $S$-$S$-biset
isomorphism between $S \times _{\Delta (\varphi)} S$ and the
$S$-$S$-biset ${}_{\varphi} S$ where the left $S$ action on
${}_{\varphi} S$ is via the automorphism $\varphi$. This isomorphism
is given by the map $\theta: S \times _{\Delta (\varphi)} S \to
{}_{\varphi} S$ defined by $\theta ([(s_1, s_2])=\varphi(s_1) s_2$.
So, we have
$$\Omega \iso \coprod _{\varphi \in \Aut(S)} {}_{\varphi} S.$$
Giving an ordering for the elements of $\Aut(S)$, we can write
$\Aut(S)=\{ \varphi_1,\dots , \varphi _n\}$. Now we define a map
from $\Gamma _{\Omega}$ to the wreath product $S \wr \Sigma
_n:=(S\times \cdots \times S) \rtimes \Sigma _n $ by sending an
automorphism $f : \Omega \to \Omega$ to the element $(f(e_1), \dots,
f(e_n) ; \sigma)$ where $e_i$ denotes the identity element of the
$i$-th component in the above disjoint union and $\sigma$ is the
permutation of the components induced by the automorphism $f$. This
map induces an isomorphism and under this isomorphism the embedding
$\iota : S \to \Gamma _{\Omega}$ becomes the embedding $S \to S \wr
\Sigma _n$ defined by $\iota (s)=(\varphi_1(s), \dots, \varphi _n
(s) ; \id)$. One can easily check that the representation of
$\Gamma_{\Omega}$ is also the same as the one given in Section 4.2
of \cite{unlu}. \qed
\end{remark}

\begin{example}\label{example:quaternionic}
Let $S$ be the generalized quaternion group $Q_{2^N}$ of order $2^N$
where $N\geq 3$. Let $\Fu $ be the fusion system on $S$ such that
all monomorphisms between the subgroups of $S$ are morphisms in
$\Fu$. Define
$$\Omega = S\times _{\Delta( \id _{C_2})} S$$
where $C_2$ is the unique cyclic group of order $2$ in $S$. Since
$C_2$ is a $\Fu$-characteristic subgroup and $\Aut
(C_2)=\{\id_{C_2}\}$, by Lemma \ref{lemma:Fchar_leftFbiset}, we can
conclude that $\Omega $ is left $\Fu $-stable, and hence $\Fu
\subseteq \Fu _S (\Gamma _{\Omega})$. Let $G$ be a finite group
acting on a space $X$ and let $\Fa$ denote the family of isotropy
subgroups of $G$-action on $X$. If every element in $\cH$ is a rank
one $2$-group, then we can choose a large $N$ and embed every
element $H \in \cH$ into $S=Q_{2^{N}}$ via some embedding $\iota _H
: H \to S$. Since the fusion system $\Fu$ includes all possible
monomorphisms between subgroups of $S$, the condition in Lemma
\ref{lemma:bridge} holds. So, the isotropy subgroups $H \in \cH$ can
be embedded in $\G _{\Omega}$ in a compatible way.

If $V$ is a representation of $S$, then for any $H \leq S$, the
representation $\Res ^S _H \widetilde V$ is isomorphic to a multiple
of the representation
$$\Ind _{C_2}^H \Res ^S _{C_2} V.$$ So, if we choose $V$ with the
property that $C_2$ acts freely on $\bbS(V)$, then $\Res ^S _H
\widetilde V$ also has the same property.\qed
\end{example}

The finite group $\Gamma_{\Omega}$ constructed in the above example
can also be expressed as a wreath product $S \wr \Sigma _n$ where
$n=|\Omega /S|=|S:C_2|$. But in this case, the image of $\iota : S
\to \Gamma _{\Omega}$ is not in the subgroup $S \times \cdots \times
S$. Under the natural projection $\pi: \G_{\Omega} \to \Sigma _n$,
the element $\pi (\iota (s))$, where $s\in S$, corresponds to the
permutation induced by the $s$ action on the coset set $S/C_2$.

Now we are ready to prove the following.

\begin{theorem}\label{theorem:rank1isotropy}
Let $G$ be a finite group acting smoothly on a manifold $M$ so that
the isotropy subgroup $G_x$ for every point  $x\in M$ is a rank one
$p$-group. Then, there exists a positive integer $N$ such that $G$
acts freely and smoothly on $M \times \bbS^N$.
\end{theorem}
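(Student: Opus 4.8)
The plan is to feed the machinery of Corollary \ref{corollay:constructing smooth actions} with a finite group $\Gamma$, a compatible family $\mathbf{A}=(\alpha_H)$, and a representation $\rho$ tailored so that every isotropy subgroup acts freely on the resulting fibre sphere. Since $G$ is finite, the family $\Fa$ of isotropy subgroups of the $G$-action on $M$ is a finite collection of rank one $p$-groups. The first step is therefore to choose a single finite group $S$ into which every $H\in\Fa$ embeds: as explained before the statement, for $p$ odd I would take $S$ to be cyclic of order $p^N$, and for $p=2$ I would take $S=Q_{2^N}$, in each case choosing $N$ large enough that all members of $\Fa$ embed (for $p=2$ this uses that both the cyclic and the generalized quaternion rank one $2$-groups embed in a sufficiently large $Q_{2^N}$). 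I then fix embeddings $\iota_H:H\to S$ for each $H\in\Fa$ and let $\Fu$ be the fusion system on $S$ in which every monomorphism between subgroups is a morphism.

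Next I would produce $\Gamma$ and the compatible family. Using the left $\Fu$-stable biset $\Omega$ of Example \ref{example:rankonePgroup} (for $p$ odd) or Example \ref{example:quaternionic} (for $p=2$), Theorem \ref{theorem:LeftFStable_FusionSystemInclusion} gives $\Fu\subseteq\Fu_S(\Gamma_{\Omega})$, so I set $\Gamma=\Gamma_{\Omega}$. Because $\Fu$ contains all monomorphisms between subgroups of $S$, the hypothesis of Lemma \ref{lemma:bridge} is automatic, and that lemma yields that the maps $\alpha_H=\iota\circ\iota_H:H\to\Gamma$ assemble into a compatible family $\mathbf{A}=(\alpha_H)$. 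For the representation I would take $\rho$ to be the $\Gamma$-module $\widetilde{V}$ built from the $S$-module $V$ singled out in the relevant example: the faithful one-dimensional representation when $S$ is cyclic, and a representation on whose sphere the central $C_2$ acts freely when $S=Q_{2^N}$. The restriction computation recorded in those examples (itself a consequence of Lemma \ref{lemma:Fchar_leftFbiset} and Proposition \ref{proposition:IsoOfConstructedRep}) shows that for every $H\leq S$ the restriction $\Res^{\Gamma}_{H}\widetilde{V}$, and hence $\rho\circ\alpha_H$, is a representation on which $H$ acts freely.

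Finally I would apply Corollary \ref{corollay:constructing smooth actions} to this data. It produces an integer $N$ and a smooth $G$-action on $M\times\bbS^N$ for which, over each point $x\in M$, the isotropy group $G_x$ acts on $\{x\}\times\bbS^N$ as the linear action on $\bbS\big((\rho\circ\alpha_{G_x})^{\oplus k}\big)$ for some $k$. The stabilizer of a point $(x,s)\in M\times\bbS^N$ is precisely the stabilizer of $s$ under this $G_x$-action, so the whole action is free as soon as each such $G_x$-action is free. Freeness of $G_x$ on $\bbS(\rho\circ\alpha_{G_x})$ forces $(\rho\circ\alpha_{G_x})^{g}=0$ for every $g\neq 1$, whence $\big((\rho\circ\alpha_{G_x})^{\oplus k}\big)^{g}=0$ as well, and the action on the fibre sphere is free. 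Thus $G$ acts freely and smoothly on $M\times\bbS^N$.

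I do not expect a single deep obstacle: the real content is carried by the earlier results, and all that remains is bookkeeping. The only points demanding genuine care are the clean passage from \emph{``$G_x$ acts freely on the fibre sphere''} to \emph{``$G$ acts freely on $M\times\bbS^N$''}, and the correct handling of the $p=2$ case through the quaternion biset of Example \ref{example:quaternionic} in place of the cyclic one, since a large $Q_{2^N}$ must absorb every rank one $2$-group simultaneously.
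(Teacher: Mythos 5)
Your proposal is correct and follows essentially the same route as the paper: the paper's proof likewise takes $\Gamma$, the compatible family $(\alpha_H)$, and the representation $\rho$ with each $\rho\circ\alpha_H$ free from Examples \ref{example:rankonePgroup} and \ref{example:quaternionic}, and then applies Corollary \ref{corollay:constructing smooth actions}. Your additional bookkeeping (choosing $N$ large so all rank one $p$-groups embed in $S$, and passing from freeness of each $G_x$-action on the fibre sphere $\bbS\big((\rho\circ\alpha_{G_x})^{\oplus k}\big)$ to freeness of the $G$-action on $M\times\bbS^N$) simply makes explicit what the paper leaves implicit.
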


\begin{proof} Let $\cH$ denote the family of isotropy subgroups of
$G$ action on $M$. By Examples \ref{example:rankonePgroup} and
\ref{example:quaternionic}, we know that there exists a finite group
$\Gamma$ and a family of compatible representations
$$ {\mathbf A}=(\alpha _H)\in \lim _{\underset{H\in \cH }{\longleftarrow
}} \Rep (H,\Gamma ).$$ In these examples we also showed that there
is a representation $\rho : \Gamma \to U(n)$ such that the
composition $\rho \circ \alpha _H : H \to U(n)$ is a free
representation for every $H\in \cH$.  Hence, by Corollary
\ref{corollay:constructing smooth actions}, $G$ acts freely and
smoothly on $M \times \bbS^N$ for some positive integer $N$.
\end{proof}

As an immediate corollary, we obtain the following.

\begin{theorem}\label{theorem:bigcenter}
Let $G$ be a $p$-group with $\rk G=r$. If $\rk Z(G) \geq r-1$, then
$G$ acts freely and smoothly on a product of $r$ spheres.
\end{theorem}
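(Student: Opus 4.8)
The plan is to split the argument into two cases according to the value of $k := \rk Z(G)$. Since $Z(G) \le G$ forces $\rk Z(G) \le \rk G = r$, while the hypothesis gives $\rk Z(G) \ge r-1$, we have $r-1 \le k \le r$, so only the two values $k = r$ and $k = r-1$ can occur. In each case I would feed the output of Lemma \ref{lemma:bigcenter} into the appropriate later result and then check that the resulting product has exactly $r$ spheres.

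In the case $k = r$, I would simply apply Lemma \ref{lemma:bigcenter} directly: it produces a smooth $G$-action on a product of $k = r$ spheres whose isotropy subgroups have rank at most $r - k = 0$. Hence every isotropy subgroup is trivial, the action is already free, and we are done with no further work.

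In the remaining case $k = r-1$, Lemma \ref{lemma:bigcenter} gives a smooth $G$-action on a manifold $M$ which is a product of $r-1$ spheres, with all isotropy subgroups of rank at most $r - (r-1) = 1$. Thus every isotropy subgroup of this action is either trivial or a rank one $p$-group. I would then invoke Theorem \ref{theorem:rank1isotropy}, applied to this $M$, which supplies a positive integer $N$ together with a free smooth $G$-action on $M \times \bbS^N$. Because $M$ is a product of $r-1$ spheres, the space $M \times \bbS^N$ is a product of $r$ spheres, and this yields the desired free smooth action.

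The only point that requires a little care is that in the second case the family of isotropy subgroups may contain the trivial subgroup (rank $0$) alongside genuine rank one $p$-groups, whereas Theorem \ref{theorem:rank1isotropy} is phrased for rank one isotropy. I would note that points with trivial isotropy already carry a free action and so pose no obstruction: one may take $\cH$ to consist of the rank one isotropy subgroups together with the trivial group, embed everything into the maximal rank one $p$-group $S$ as in Examples \ref{example:rankonePgroup} and \ref{example:quaternionic}, and observe that the free-representation condition is vacuous on the trivial subgroup. Given Theorem \ref{theorem:rank1isotropy}, there is no substantive obstacle remaining; the entire content of the argument is the case split above together with the bookkeeping that confirms each case outputs precisely $r$ spheres.
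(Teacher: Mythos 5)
Your proof is correct and is essentially the paper's own argument: the paper proves this theorem in one line by combining Lemma \ref{lemma:bigcenter} with Theorem \ref{theorem:rank1isotropy}, exactly as you do. Your added bookkeeping (the case split $k=r$ versus $k=r-1$, and the observation that trivial isotropy subgroups cause no difficulty in Theorem \ref{theorem:rank1isotropy}) is sound and merely makes explicit what the paper leaves implicit.
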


\begin{proof} This follows from Lemma \ref{lemma:bigcenter} and
Theorem \ref{theorem:rank1isotropy}.
\end{proof}

Now Theorem \ref{theorem:RankTwoPGroups} follows as a special case.

\begin{proof}[Proof of Theorem \ref{theorem:RankTwoPGroups}]
It is proved in \cite{heller} that $(\bbZ /p)^3 $ does not act
freely on a product of two spheres. Hence it is enough to construct
free actions of $p$-groups which has $\rk (G)\leq 2$. Note that
every finite $p$-group has a nontrivial center, so the existence of
such actions follows from Theorem \ref{theorem:bigcenter}.
\end{proof}

In the rest of the section, we prove Theorem
\ref{theorem:ElementaryAbelianIsotropy}. The proof is similar to the
above proof. We first consider the following example.

\begin{example}\label{example:elementaryabelian}
Let $p$ be a prime number and $S$ be the elementary abelian
$p$-group of order $p^N$ for some $N>1$. Let $\Fu $ be the fusion
system on $S$ such that all monomorphisms between the subgroups of
$S$ are morphisms in the fusion system $S$. Define
$$\Omega =\coprod _{\varphi \in \Aut(S)} S\times _{\Delta(\varphi)}
S.$$ Note that $\Omega $ is left $\Fu $-stable by Lemma
\ref{lemma:Fchar_leftFbiset} and hence, by Theorem
\ref{theorem:LeftFStable_FusionSystemInclusion}, we have $\Fu
\subseteq \Fu _{S} (\G)$. If $G$ is a finite group acting on a space
with elementary abelian isotropy subgroups, then we can find a
compatible family of representations $\alpha _H : H \to \G$ by first
choosing embeddings $\iota _H : H \to S$ and then by applying Lemma
\ref{lemma:bridge}.

In the following application, we can take the representation $V$ of
$S$ as the augmented regular representation $V=\bbC G-\bbC$, and
then construct $\widetilde{V}$ in the usual way. Note that for any
isotropy subgroup $H$, the representation $\alpha_H^* (\widetilde
V)$ is isomorphic to the direct sum
$$\bigoplus _{\varphi \in \Aut(S)}  (\iota _H )^* \varphi ^*(
V)$$ which is isomorphic to $(\iota _H)^* (V^{\oplus n})$ where
$n=|\Aut(S)|$. Note that we can choose $S$ so that when $H$ is an
isotropy subgroup of maximal rank the embedding $\iota _H: H \to S$
is an isomorphism. The action of an isotropy subgroup $H$  on
$\bbS(\widetilde V)$ will have no fixed points if $H$ has maximal
rank.
\end{example}

\begin{proof}[Proof of Theorem \ref{theorem:ElementaryAbelianIsotropy}]
Let $\cH$ denote the family of isotropy subgroups of $G$ action on
$M$. By Example \ref{example:elementaryabelian}, there is a finite
group $\Gamma$ and a family of compatible representations
$$ {\mathbf A}=(\alpha _H)\in \lim _{\underset{H\in \cH }{\longleftarrow
}} \Rep (H,\Gamma )$$ together with a representation $\rho : \Gamma
\to U(n)$ such that for every $H\in \cH$ of maximal rank, the
representation $\rho_H=\rho \circ \alpha_H :H \to U(n)$ has the
property that $H$ acts on $\bbS(\rho _H)$ without fixed points. By
Corollary \ref{corollay:constructing smooth actions}, there is a
smooth action on $M \times \bbS^{n_1}$ for some positive integer
$n_1$ such that for every $G_{x} \in \Fa$ of maximal rank, $G_x$
action on $\{ x\}\times \bbS^{n_1}$ is without fixed points. So, the
isotropy subgroups of $G$ action on $M \times \bbS^{n_1}$ has rank
$\leq k-1$. Repeating the argument recursively, we can conclude that
$G$ acts freely and smoothly on $M \times \bbS^{n_1} \times \cdots
\bbS^{n_k}$ for some positive integers $n_1,\dots, n_k$.
\end{proof}

The proof of Corollary \ref{corollary:ExtraSpecial} follows easily
from Theorem \ref{theorem:ElementaryAbelianIsotropy}. To see this,
observe that if $G$ is an (almost) extraspecial $p$-group of rank
$r$, then every subgroup which intersects trivially with the center
is an elementary abelian subgroup with rank less than or equal to
$r-1$. This is because the Frattini subgroup of $G$ is included in
the center $Z(G)$ of $G$ and that $Z(G)$ is cyclic. Let $a$ be a
central element of order $p$ in $G$. Let $\chi $ be the
one-dimensional representation of $\la a \ra$ defined by $a\mapsto
e^{2\pi i/p}$, and define $\theta =\Ind ^G_{\la a \ra}(\chi )$.
Then, $G$ action on $M=\bbS(\theta )$ has all its isotropy groups
elementary abelian with rank less than or equal to $r-1$. So, the
result follows from Theorem \ref{theorem:ElementaryAbelianIsotropy}.

Note that Theorem \ref{theorem:ElementaryAbelianIsotropy} applies to
a larger class of groups than (almost) extra-special $p$-groups. For
example, if $G$ is a $p$-group such that the elements of order $p$
in the Frattini subgroup $\Phi (G)$ of $G$ are all central, then the
action constructed in Lemma \ref{lemma:bigcenter} will satisfy the
assumptions of Theorem \ref{theorem:ElementaryAbelianIsotropy}, so
we can obtain free smooth actions of these groups on $r$ many
spheres where $r$ is the rank of the group. A particular example of
such a group would be a $p$-group $G$ which is a central extension
of two elementary abelian $p$-groups.

\providecommand{\bysame}{\leavevmode\hbox
to3em{\hrulefill}\thinspace}
\providecommand{\MR}{\relax\ifhmode\unskip\space\fi MR }
\providecommand{\MRhref}[2]{%
  \href{http://www.ams.org/mathscinet-getitem?mr=#1}{#2}
} \providecommand{\href}[2]{#2}

\end{document}